\definecolor{darkred}{rgb}{0.9,0.1,0.1}
\newtheorem{proposition}{Proposition}[section]
\newtheorem{theorem}{Theorem}[section]
\newtheorem{lemma}{Lemma}[section]
\newtheorem{corollary}{Corollary}
\definecolor{darkred}{rgb}{0.9,0.1,0.1}
\def\eps{\varepsilon}
\definecolor{darkred}{rgb}{0.9,0.1,0.1}
\title{Periodic homogenization of non-local operators with a convolution type kernel}
\author{A. Piatnitski$^{\small\bf a,b}$ and E. Zhizhina$^{\small\bf a}$\\
\\
{
\small $^{\small\bf a}$Institute for Information Transmission Problems of RAS,}\\[-1.5mm]
{\small 19, Bolshoi Karetnyi per. build.1,}\\[-1.5mm]
{\small 127051 Moscow, Russia}\\
{\small $^{\small\bf b}$University of Tromso, Campus in Narvik,}\\[-1.5mm]
{\small P.O.Box 385, Narvik 8505, Norway}\\
$ $}
\def\eps{\varepsilon}
\begin{document}

\maketitle

\begin{abstract}
The paper deals with homogenization problem for a non-local linear operator with
a kernel of convolution type in a medium with a periodic structure.
We consider the natural diffusive scaling of this operator and study the limit behaviour of the
rescaled operators as the scaling parameter tends to 0. More precisely we show that in the topology
of resolvent convergence the family of rescaled operators converges to a second order elliptic operator
with constant coefficients. We also prove the convergence of the corresponding semigroups both in $L^2$
space and the space of continuous functions,  and show that for the related family of Markov processes the invariance principle holds.
\end{abstract}

\setcounter{section}{-1}
\section{Introduction}

Recent time there is an increasing interest to the integral operators with a kernel of convolution type. These operators appear in many applications, such as models of population dynamics and the continuous contact model, where they describe the evolution of the density of a population, see for instance \cite{BCF, CCR, FKKMZ, KKP, KPZ} for the details. In these papers only the case of spacial homogeneous dispersal kernel has been investigated. We focus in this paper on the spacial inhomogeneous dispersal kernel depending both on the displacement $y-x$, and on the starting and the ending positions $x, y \in \mathbb R^d$.
We also mention here that the convolution type non-local operators describe the evolution of jump Markov processes (see for instance \cite{CCR}).
Although some of the properties of convolution type operators are similar to those of second order elliptic differential operators, there are also essential differences, for example, in the form of the fundamental solution for the corresponding nonlocal parabolic equation. In this connection it is interesting to understand which asymptotic properties of differential operators are inherited by nonlocal convolution type operators, and which are not. In this paper we study one of such question, namely we consider homogenization problem for convolution type operators in a periodic medium.



 In this work we consider an integral convolution type operator of the form
\begin{equation}\label{L_u}
(L u)(x) \ = \ \lambda(x) \int\limits_{\mathbb R^d} a(x-y) \mu(y) (u(y) - u(x)) dy
\end{equation}
Here $\lambda(x)$ and $\mu(y)$ are bounded positive periodic functions characterizing the properties of the medium, and $a(z)$ is the jump kernel being a positive integrable function such that $a(-z)=a(z)$.  The detailed assumptions are given in  the next Section.

We then make a diffusive scaling of this operator
\begin{equation}\label{L_u_biseps}
(L^\eps u)(x) \ = \ \eps^{-d-2}\lambda\Big(\frac{x}{\eps}\Big) \int\limits_{\mathbb R^d} a\Big(\frac{x-y}{\eps}\Big) \mu\Big(\frac{y}{\eps}\Big) (u(y) - u(x)) dy,
\end{equation}
where $\eps$ is a positive scaling factor.
Our goal is to study the homogenization problem for operators $L^\eps$ that is to characterize the limit behaviour of $L^\eps$ as $\eps\to 0$.
\noindent

Homogenization theory of differential operators is a well-developed field, there is a vast literature
on this topic, we mention here the monographs \cite{BLP} and \cite{JKO}. In contrast with differential operators, the homogenization theory for convolution type integral operators and for more general integro-differential operators is not well-developed.

There are just several works in the existing mathematical literature devoted to homogenization problems for non-local operators describing the processes with jumps. Although in these works an essential progress has been achieved, there are still many interesting open problems in the area.

In \cite{Franke2013}  jump-diffusions
with periodic coefficients driven by stable L\'evy-processes with stability index
$\alpha>1$ were considered. It was shown that the limit process is an $\alpha$-stable L\'evy process
with an averaged jump-measure.

 The paper \cite{Rho_Var2008} deals with scaling limits of the solutions to stochastic differential
equations with stationary coefficients driven by Poisson random measures and Brownian
motions. The annealed convergence theorem is proved, in which the limit exhibits
a diffusive or superdiffusive behavior, depending on the integrability properties of
the Poisson random measure. It is important in this paper that the diffusion coefficient does not
degenerate.

In the recent work \cite{Sandric2016}  the homogenization problem for a Feller diffusion process with jumps generated by an integro-differential operator has been studied under the assumptions that the corresponding generator has rapidly periodically oscillating diffusion and jump coefficients, and under additional
regularity conditions.

It should be noted that in contrast with \eqref{L_u_biseps}, the generators considered in the quoted papers
have a non-zero diffusion part which improved the compactness properties of the corresponding resolvent.  Also, the kernels of the integral operators studied in these papers do not admit an oscillation in $y$ variable.

The goal of the present work is to prove homogenization result for the operators $L^\eps$.
More precisely, we are going to show that the family $L^\eps$ converges to a second order
divergence form elliptic operator with constant coefficient in the so-called $G$-topology that is
for any $m>0$ the family of operators $(-L^\eps+m)^{-1}$ converges strongly in $L^2(\mathbb R^d)$
to the operator $(-L^0+m)^{-1}$ where $L^0=\Theta^{ij}\frac{\partial^2}{\partial x^i\partial x^j}$
with a positive definite constant matrix $\Theta$. This is the subject of Theorem \ref{T1} in Section \ref{s_pbmset}.

As a consequence of this convergence we obtain the convergence of the corresponding semigroups.

Under additional regularity assumptions on the functions $a(x)$, $\lambda(x)$ and $\mu(x)$ the operator $L$
acts in the space of continuous functions in $\mathbb R^d$ that vanish at infinity. Also this operator generates a Markov process with trajectories in the space of c\'adl\'ag functions with values in $\mathbb R^d$, we   denote this space by  $D_{\mathbb R^d}[0, \infty)$. Our second aim is to show that under mentioned conditions the homogenization result
is also valid in the space of continuous functions in $\mathbb R^d$ and that under the diffusive scaling the invariance principle holds for the family of rescaled processes.

The methods used in the paper rely on asymptotic expansion techniques and constructing  periodic correctors of the first and second order.
Notice that, in contrast with the case of differential operators, in our case the coefficients of the auxiliary problem
on the periodicity cell differ from the coefficients of the original problem. This is an interesting feature of the studied non-local operators.

Another crucial feature of the non-local operators considered here is non-compactness of their resolvent. In this connection we cannot use the techniques based on the compactness of the family of solutions
and have to replace them with different arguments.

The paper is organized as follows. In Section \ref{s_pbmset} we provide the detailed setting of the problem
and formulate our homogenization result in the space $L^2(\mathbb R^d)$.

Then in Section \ref{s_corr} we introduce a number of auxiliary periodic problems, define correctors and prove some technical results.

Section \ref{s_prooft1} is devoted to the proof of the homogenization result in $L^2(\mathbb R^d)$.

The convergence of the corresponding semigroups is justified in Section \ref{s_semigroup}.
It implies the convergence of solutions to the corresponding evolution equations.


Finally, in Section \ref{s_markov} we study the operator $L$ in the space of continuous functions in
$\mathbb R^d$ that vanish at infinity. We show that, under natural regularity
assumptions on the coefficients of $L$, the homogenization result for operators $L^\eps$ remains valid
in this space of continuous functions, and that the corresponding  semigroups converge.
 Moreover, we prove that for the processes generated by $L^\eps$ the invariance principle
holds in the paths space.

\section{Problem setup}\label{s_pbmset}

In this section we provide all the conditions on the coefficients of operator $L$
and then formulate our homogenization result for the family $L^\eps$

For the function $a(z)$ we assume the following: $a(z) \in  L^{1}(\mathbb R^d) \cap L^{2}_{\rm loc}(\mathbb R^d)$, $a(z) \ge 0; \  a(-z) = a(z)$, and
\begin{equation}\label{M2}
\| a \|_{L^1(\mathbb R^d)}  = \int\limits_{\mathbb R^d} a(z) \, dz = a_1 >0; \qquad \int\limits_{\mathbb R^d} |z|^2 a(z) \, dz < \infty.
\end{equation}

Function $\lambda(x), \mu(x)$ are periodic and bounded from above and from below:
\begin{equation}\label{lm}
0< \alpha_1 \le \lambda(x), \  \mu(x) \le \alpha_2 < \infty.
\end{equation}
In what follows we identify periodic functions with functions defined on the torus $\mathbb{T}^d = [0,1]^d $. The operator $L$ is a bounded (non-symmetric) operator in $L^2(\mathbb R^d)$. Indeed, letting
$$
(L^-u)(x)=\lambda(x)\int_{\mathbb R^d}a(x-y)\mu(y)u(y)\,dy,
$$
we have for any $u\in C_0^\infty(\mathbb R^d)$
$$
\|(L^-u)\|^2_{L^2(\mathbb R^d)}=\int_{\mathbb R^d}dx\,\lambda^2 (x)\int_{\mathbb R^d}
a(x-y)\mu(y)u(y)\,dy\,\int_{\mathbb R^d}
a(x-z)\mu(z)u(z)\,dz
$$
$$
\leq \alpha_2^4\int_{\mathbb R^d}dx\,\int_{\mathbb R^d}
a(x-y)|u(y)|\,dy\,\int_{\mathbb R^d}
a(x-z)|u(z)|\,dz
$$
$$
=\alpha_2^4\int_{\mathbb R^d}
a(y)\,dy\,\int_{\mathbb R^d}
a(z)\,dz\int_{\mathbb R^d}|u(x+y)|\,|u(x+z)|dx\,\leq a_1^2\alpha_2^4\|u\|_{L^2(\mathbb R^2)}^2.
$$
Therefore, $L^-$ can be extended to a bounded operator acting from $L^2(\mathbb R^d)$ to $L^2(\mathbb R^d)$.
We denote the space of such operators by $\mathcal{L}(L^2(\mathbb R^d),L^2(\mathbb R^d))$. This implies the boundedness of $L$.

Let us consider the family of operators
\begin{equation}\label{L_eps}
(L^{\varepsilon} u)(x) \ = \ \frac{1}{\varepsilon^{d+2}} \int\limits_{\mathbb R^d} a \Big( \frac{x-y}{\varepsilon} \Big) \lambda \Big( \frac{x}{\varepsilon} \Big) \mu \Big( \frac{y}{\varepsilon} \Big) \big( u(y) - u(x) \big) dy.
\end{equation}
We are interested in the limit behavior of the operators $L^{\varepsilon}$ as $\varepsilon \to 0$. Since the norm of  $L^{\varepsilon}$ in $L^2(\mathbb R^d)$ tends to infinity, the limit operator if exists need not be bounded. We are going to show that the operators  $L^{\varepsilon}$ converge in the topology of  resolvent convergence. Let us fix an arbitrary $m>0$, 
and define $u^{\varepsilon}$ as the solution of equation:
\begin{equation}\label{u_eps}
(L^{\varepsilon} - m) u^{\varepsilon} \ = \ f, \quad \mbox{ i.e. } \; u^{\varepsilon} \ = \ (L^{\varepsilon} - m)^{-1} f,
\end{equation}
with $f \in L^2(\mathbb R^d)$. Denote by $L^0$ the following operator in  $L^2(\mathbb R^d)$:
\begin{equation}\label{L_hat}
L^0 u \ = \
\Theta^{i j} \frac{\partial^2 u}{\partial x^i   \partial x^j} \ = \ \Theta \nabla \nabla u, \quad {\cal D}( L^0) = H^2(\mathbb R^d)
\end{equation}
with a positive definite matrix $\Theta = \{ \Theta^{i j} \}, \ i,j = 1, \ldots, d,$ defined below, see (\ref{FA2}); here and in what follows we assume the summation over repeated indices. Let $u_0(x)$ be a solution of the equation
\begin{equation}\label{u_0}
\Theta^{i j} \frac{\partial^2 u_0}{\partial x^i   \partial x^j} - m u_0 = f,  \quad \mbox{ i.e. } \; u_0 \ = \ ( L^0 - m)^{-1} f
\end{equation}
with the same right-hand side $f$ as in (\ref{u_eps}).

Our first  result reads.

\begin{theorem}\label{T1} The family of resolvents  $(L^{\varepsilon} - m)^{-1}$ converges
strongly to the resolvent $(L^0 - m)^{-1}$, as $\eps\to0$, that is
for any $f \in L^2(\mathbb R^d)$ it holds:
\begin{equation}\label{t1}
\| (L^{\varepsilon} - m)^{-1} f - (L^0 - m)^{-1} f \|_{L^2(\mathbb R^d)} \ \to 0, \quad \mbox{  as } \; \varepsilon \to 0.
\end{equation}

\end{theorem}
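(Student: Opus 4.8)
The plan is to use the classical two-scale asymptotic expansion adapted to the non-local setting, combined with energy (resolvent) estimates that do not rely on compactness. Write $u^\eps = u_0(x) + \eps\, \chi^1(\tfrac{x}{\eps})\cdot\nabla u_0(x) + \eps^2\, \chi^2_{ij}(\tfrac{x}{\eps})\,\partial^2_{ij} u_0(x) + r^\eps(x)$, where $u_0=(L^0-m)^{-1}f$ is the solution of the homogenized equation \eqref{u_0}, and $\chi^1,\chi^2$ are the first- and second-order correctors solving the auxiliary periodic cell problems introduced in Section \ref{s_corr}. First I would substitute this ansatz into $(L^\eps-m)u^\eps$ and expand $L^\eps$ applied to each term. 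Because $L^\eps$ is an integral operator with kernel $\eps^{-d-2}a(\tfrac{x-y}{\eps})\lambda(\tfrac{x}{\eps})\mu(\tfrac{y}{\eps})$, the change of variables $y=x+\eps z$ turns it into $\eps^{-2}\lambda(\tfrac{x}{\eps})\int a(z)\mu(\tfrac{x}{\eps}+z)\big(u(x+\eps z)-u(x)\big)\,dz$; Taylor-expanding $u$ in $\eps z$ and using the second moment bound \eqref{M2} generates, order by order in $\eps$, the hierarchy of cell equations. The key point — and the feature emphasized in the introduction — is that the cell problems carry the "weight" $\mu$ inside the kernel, so the effective coefficients and the corrector equations involve $a$ convolved against $\mu$-modulated data rather than $a$ alone; one checks that the $O(\eps^{-2})$ term vanishes identically, the $O(\eps^{-1})$ term vanishes by the definition of $\chi^1$, and the $O(1)$ term produces exactly $L^0 u_0 - mu_0 - f = 0$ provided $\Theta$ is defined by the appropriate averaged flux formula (\ref{FA2}) and $\chi^2$ absorbs the remaining fast oscillation.

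The next step is to control the remainder $r^\eps$. From the construction, $(L^\eps - m) r^\eps = \rho^\eps$ where $\rho^\eps \to 0$ in $L^2(\mathbb R^d)$ — here one needs enough regularity and decay of $u_0$, which follows from $f\in L^2$ by elliptic regularity for $L^0$ (and a density argument reducing to $f\in C_0^\infty$, hence $u_0\in H^s$ for all $s$ with Schwartz-type decay). The bounded correctors (periodicity cell solutions are bounded, as established in Section \ref{s_corr}) ensure $\eps\chi^1(\tfrac{x}{\eps})\cdot\nabla u_0 \to 0$ and $\eps^2\chi^2(\tfrac{x}{\eps})\nabla\nabla u_0 \to 0$ in $L^2$. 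It then suffices to show $\|r^\eps\|_{L^2}\to 0$, i.e. that $(L^\eps - m)^{-1}$ is uniformly bounded in $\eps$ as an operator on $L^2(\mathbb R^d)$. This uniform bound is the crucial point: splitting $L^\eps = (L^\eps)^- - b^\eps(x)$ where $(L^\eps)^-$ is the positive integral part and $b^\eps(x) = \eps^{-2}\lambda(\tfrac{x}{\eps})\int a(z)\mu(\tfrac{x}{\eps}+z)\,dz$ blows up like $\eps^{-2}$, one sees $-L^\eps + m = b^\eps(x) - (L^\eps)^- + m$ and a Schur-type / quadratic-form argument (using symmetry of $a$ and a symmetrization that conjugates $L^\eps$ to a self-adjoint operator with weight $\mu^{1/2}\lambda^{-1/2}$ or similar) gives $\big((-L^\eps + m)u,u\big) \ge m'\|u\|^2$ for some $m'>0$ uniformly in $\eps$, hence $\|(L^\eps - m)^{-1}\|\le 1/m'$.

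Having both ingredients, $(L^\eps - m)u^\eps = f$ and $(L^\eps-m)\big(u_0 + \eps\chi^1\cdot\nabla u_0 + \eps^2\chi^2\nabla\nabla u_0\big) = f - \rho^\eps$, subtract and apply the uniform resolvent bound:
\begin{equation*}
\big\| u^\eps - u_0 - \eps\,\chi^1(\tfrac{\cdot}{\eps})\cdot\nabla u_0 - \eps^2\,\chi^2(\tfrac{\cdot}{\eps})\nabla\nabla u_0 \big\|_{L^2(\mathbb R^d)} \le \frac{1}{m'}\,\|\rho^\eps\|_{L^2(\mathbb R^d)} \to 0,
\end{equation*}
and since the two corrector terms also tend to $0$ in $L^2$, we conclude $\|u^\eps - u_0\|_{L^2(\mathbb R^d)}\to 0$, which is \eqref{t1}. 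The main obstacle I anticipate is twofold: first, establishing the uniform-in-$\eps$ resolvent bound without any compactness of the resolvents (the paper flags exactly this non-compactness as the central difficulty), which requires a careful symmetrization and a coercivity estimate exploiting the $O(\eps^{-2})$ cancellation between the diagonal and the integral parts; and second, verifying that the second-order corrector $\chi^2$ can indeed be chosen to make the $O(1)$ term match $L^0$ with a genuinely positive-definite $\Theta$ — this is where the discrepancy between the cell-problem coefficients and the original coefficients (the convolution with $\mu$) must be handled precisely, and where the solvability (Fredholm alternative on $\mathbb T^d$) of the non-local cell equations, presumably proved in Section \ref{s_corr}, is invoked.
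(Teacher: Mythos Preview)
Your proposal is correct and follows essentially the same route as the paper: the ansatz $v^\eps=u_0+\eps\chi^1(\tfrac{x}{\eps})\cdot\nabla u_0+\eps^2\chi^2(\tfrac{x}{\eps})\cdot\nabla\nabla u_0$, the Taylor expansion producing the cell hierarchy, the uniform resolvent bound (obtained in the paper by noting that $L^\eps$ is symmetric and non-positive in $L^2(\mathbb R^d,\nu_\eps)$ with $\nu_\eps=\mu/\lambda$, which is exactly your proposed symmetrization), and the density reduction to smooth $f$ are all the paper's steps. One small caveat: under the standing hypotheses the paper only obtains $\chi^1,\chi^2\in L^2(\mathbb T^d)$, not $L^\infty$, so the vanishing of the corrector terms and of the remainder $\rho^\eps$ in $L^2(\mathbb R^d)$ requires the slightly more careful periodic-cell summation estimate of Lemma~\ref{fi_0} rather than a direct pointwise bound.
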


\section{Correctors and auxiliary statements}
\label{s_corr}

In this section we introduce auxiliary periodic problems,  construct periodic correctors
and then make use of these correctors to approximate the solution $u^\eps$.

We consider first the case when $f \in {\cal S}(\mathbb R^d)$ and prove the convergence of the corresponding functions (\ref{u_eps}) to (\ref{u_0}):
\begin{equation}\label{convergence1}
\| u^{\varepsilon} - u_0 \|_{L^2(\mathbb R^d)} \ \to 0, \quad \mbox{ as } \ \varepsilon \to 0.
\end{equation}
\\

Let us consider a "small" perturbation $v^{\varepsilon}$ of the function $u_0$ defined by:
\begin{equation}\label{v_eps}
v^{\varepsilon}(x) \ = \ u_0(x)+ \varepsilon \varkappa_1 (\frac{x}{\varepsilon})\cdot \nabla u_0(x) + \varepsilon^2 \varkappa_2 (\frac{x}{\varepsilon})\cdot \nabla \nabla u_0(x)
\end{equation}
with  periodic vector function $\varkappa_1(x) \in  (L^2({\mathbb T^d}))^d$, i.e. $\varkappa_1^{i}(x) \in L^2(\mathbb T^d), \; \forall i=1, \ldots, d,$ and periodic matrix function $\varkappa_2(x) \in (L^2({\mathbb T^d}))^{d^2}$, i.e. $\varkappa_2^{i j}(x) \in L^2(\mathbb T^d), \; \forall i,j = 1, \ldots, d,$ that will be defined below, see eq. (\ref{kappa_1}) and (\ref{kappa_2}).
In (\ref{v_eps}) an later on $\varkappa_1\cdot \nabla u_0$ stands for $\varkappa_1^{i}\frac{\partial}{\partial x^i}u_0$ and  $\varkappa_2\cdot \nabla \nabla u_0$ stands for $\varkappa_2^{ij}
\frac{\partial^2}{\partial x^i\partial x^j}u_0$. In particular, in the case $d=1$
\begin{equation}\label{v_eps_1}
v^{\varepsilon}(x) \ = \ u_0(x)+ \varepsilon \varkappa_1 (\frac{x}{\varepsilon}) u'_0(x) + \varepsilon^2 \varkappa_2 (\frac{x}{\varepsilon}) u''_0(x)
\end{equation}
Since $L^0$ is an elliptic operator with constant coefficients and $f \in {\cal S}(\mathbb R^d) $, then  $u_0 \in {\cal S} (\mathbb R^d) $ and $v^{\varepsilon}$ is correctly defined.
After substitution (\ref{v_eps}) to (\ref{L_eps}) we get
$$
\begin{array}{rl}
\displaystyle
(L^{\varepsilon} v^{\varepsilon})(x) \!&\!
\displaystyle
= \ \frac{1}{\varepsilon^{d+2}} \int\limits_{\mathbb R^d} a \Big( \frac{x-y}{\varepsilon} \Big) \lambda \Big( \frac{x}{\varepsilon} \Big) \mu \Big( \frac{y}{\varepsilon} \Big)
\bigg\{ u_0(y)+ \varepsilon \varkappa_1 \Big(\frac{y}{\varepsilon}\Big)\cdot \nabla u_0(y) +
\\[3mm] \!&\! \displaystyle +\
\varepsilon^2 \varkappa_2 \Big(\frac{y}{\varepsilon}\Big)\cdot \nabla \nabla u_0(y) -
u_0(x)-\varepsilon \varkappa_1 \Big(\frac{x}{\varepsilon}\Big)\cdot \nabla u_0(x) - \varepsilon^2 \varkappa_2 \Big(\frac{x}{\varepsilon}\Big)\cdot \nabla \nabla u_0(x) \bigg\} dy.
\end{array}
$$

\begin{lemma}\label{ml} (The main lemma)  Assume that $f \in {\cal{S}}(\mathbb R^d)$, then there exist functions $\varkappa_1 \in (L^2(\mathbb T^d))^d$  and $\varkappa_2 \in (L^2(\mathbb T^d))^{d^2}$ (a vector function $\varkappa_1$ and a matrix function $\varkappa_2$) and a positive definite matrix $\Theta$ such that that  for the function $v^{\varepsilon}$ defined by (\ref{v_eps}) we have
\begin{equation}\label{mle}
L^{\varepsilon} v^{\varepsilon} \ = \
\Theta^{ij} \frac{\partial^2 u_0}{\partial x^i  \partial x^j} \ + \ \phi_\varepsilon, \quad \mbox{ where } \; \lim\limits_{\varepsilon \to 0}\| \phi_\varepsilon \|_{L^2(\mathbb R^d)}= 0.
\end{equation}

\end{lemma}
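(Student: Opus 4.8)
The plan is to expand $L^\varepsilon v^\varepsilon$ in powers of $\varepsilon$ and to choose $\varkappa_1$, $\varkappa_2$, $\Theta$ so that the singular terms of orders $\varepsilon^{-2}$ and $\varepsilon^{-1}$ vanish identically, while the order $\varepsilon^0$ term reproduces $\Theta^{ij}\partial^2_{x^ix^j}u_0$ up to an $L^2$-small remainder. Write $\xi=x/\varepsilon$ and $\eta=y/\varepsilon$, and change variables $y=x-\varepsilon z$ in the integral, so that the kernel becomes $a(z)$ integrated in $z$ and the oscillating factors are $\lambda(\xi)$ and $\mu(\xi-z)$. Taylor-expanding $u_0(x-\varepsilon z)$, $\nabla u_0(x-\varepsilon z)$, $\nabla\nabla u_0(x-\varepsilon z)$ in $\varepsilon$ (legitimate since $u_0\in\mathcal S(\mathbb R^d)$ and $a$ has a finite second moment by \eqref{M2}), one collects the coefficients of each power of $\varepsilon$; the coefficients are themselves periodic functions of $\xi$ obtained by integrating against $a(z)$, $z a(z)$, $z\otimes z\, a(z)$.

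The first step is the order $\varepsilon^{-2}$ term: it is $\bigl(\int a(z)\,dz\bigr)$ times $\lambda(\xi)\bigl(\mu(\xi)-\mu(\xi)\bigr)u_0(x)$-type expression — more precisely it collects the $u_0(y)-u_0(x)$ piece with no derivatives; this must be made to vanish, which is automatic because the $u_0(x)$ terms cancel and the leading part of $u_0(y)$ is $u_0(x)$ as well. The genuine constraint comes at orders $\varepsilon^{-1}$ and $\varepsilon^0$: the $\varepsilon^{-1}$ terms, involving $\varkappa_1$ and $\nabla u_0$, must vanish, which forces the cell equation for $\varkappa_1$ — an equation of the form $\lambda(\xi)\int a(z)\mu(\xi-z)\bigl(\varkappa_1(\xi-z)-\varkappa_1(\xi)\bigr)\,dz = -\lambda(\xi)\int a(z)\mu(\xi-z)\,z\,dz$ (this is the auxiliary problem \eqref{kappa_1} announced in the statement, and as the introduction warns, its coefficients differ from those of $L$). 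Solvability of this equation on $\mathbb T^d$ in $L^2$ must be checked via a Fredholm alternative for the bounded operator $u\mapsto\lambda\int a(\cdot-y)\mu(y)(u(y)-u(\cdot))$ on the torus: the relevant operator is a compact perturbation of multiplication, its kernel is the constants, and the right-hand side integrates to zero against the invariant density, so a solution exists. Then at order $\varepsilon^0$ one collects all remaining terms — those from the second-order Taylor term of $u_0$, from the first-order term of $\varepsilon\varkappa_1\cdot\nabla u_0$, and from $\varepsilon^2\varkappa_2\cdot\nabla\nabla u_0$; setting the non-constant-in-$\xi$ part to zero defines the cell equation \eqref{kappa_2} for $\varkappa_2$ (again solvable by the same Fredholm argument, the compatibility condition being exactly the definition of $\Theta^{ij}$ as the cell average of the flux, cf. \eqref{FA2}), and the constant-in-$\xi$ part is precisely $\Theta^{ij}\partial^2_{x^ix^j}u_0$. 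Positive definiteness of $\Theta$ follows from the standard variational characterization: $\Theta\zeta\cdot\zeta$ equals a Dirichlet-type form $\tfrac12\int\!\!\int a(z)\mu(\cdot-z)\bigl|\zeta\cdot z + \varkappa_1(\cdot-z)\cdot\zeta-\varkappa_1(\cdot)\cdot\zeta\bigr|^2$, which is strictly positive because $a$ is not supported on a hyperplane (finite positive mass, even, with $|z|^2a$ integrable).

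Finally, everything left over is the remainder $\phi_\varepsilon$: it consists of (i) Taylor remainders of order $\varepsilon^{1}$ and higher, each of which is $\lambda(\xi)$ times an integral of $a(z)$ or $|z|^2 a(z)$ or $|z|^3 a(z)$ against bounded derivatives of $u_0$ — but here one must be slightly careful, since a crude third-moment bound on $a$ is not assumed; instead one truncates the $z$-integral at $|z|\le\varepsilon^{-1/2}$, say, estimating the tail by the second moment and the Schwartz decay of $u_0$, and on the bounded region uses $|z|^3 a(z)\le\varepsilon^{-1/2}|z|^2a(z)$ to gain a power of $\varepsilon^{1/2}$; (ii) terms where $\varkappa_1,\varkappa_2$ are only in $L^2(\mathbb T^d)$, so that products like $\varkappa_2(\xi-z)\cdot\nabla\nabla u_0(x-\varepsilon z)$ are estimated in $L^2(\mathbb R^d)$ using the mean-value/ergodic bound $\|g(\cdot/\varepsilon)h\|_{L^2(\mathbb R^d)}\le C\|g\|_{L^2(\mathbb T^d)}\|h\|_{\text{(suitable)}}$ valid for $h$ smooth and decaying, together with $\|a*|\cdot|^k|\nabla^\ell u_0|\|$-type convolution estimates. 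The main obstacle, I expect, is precisely this last point: controlling $\phi_\varepsilon$ in $L^2(\mathbb R^d)$ when the correctors have only $L^2$ regularity on the cell and the kernel $a$ has merely a finite second moment, which is why the proof needs the truncation argument above rather than a naive Taylor estimate, and why the $L^2_{\rm loc}$ assumption on $a$ in \eqref{M2} is used to make sense of the products $a(z)\varkappa_j(\xi-z)$.
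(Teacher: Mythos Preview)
Your proposal follows essentially the same route as the paper: the same change of variables $z=(x-y)/\varepsilon$ and Taylor expansion, the same cell problems \eqref{kappa_1} and \eqref{kappa_2} solved via the Fredholm alternative on $L^2(\mathbb T^d)$ (kernel of $A^\ast$ spanned by $\mu$), the same Dirichlet-form identity for positive definiteness of $\Theta$ (though the correct weight is $\mu(\xi)\mu(\xi-z)$, arising from the orthogonality-to-$\mu$ solvability condition), and the same $|z|\le\varepsilon^{-1/2}$ truncation in the remainder. One small correction to your bookkeeping: the truncation is needed for the order-$\varepsilon^0$ piece of $\phi_\varepsilon$ (the term $\int a(z)\mu(\xi-z)\big[\int_0^1\nabla\nabla u_0(x-\varepsilon zt)(1-t)\,dt-\tfrac12\nabla\nabla u_0(x)\big]\cdot z\otimes z\,dz$, which has no $\varepsilon$ prefactor and whose smallness comes from continuity of translations in $L^2$), whereas the $\varepsilon^1$ remainders involving $\varkappa_1,\varkappa_2$ are bounded directly by $\varepsilon\int|z|^2 a(z)\,dz$ times the periodic-product estimate you describe---no third moment or truncation is required there.
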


\begin{proof}

After change of variables $\frac{x-y}{\varepsilon}=z$ we get
\begin{equation}\label{ml_1}
\begin{array}{r}\displaystyle
\!\!\!(L^{\varepsilon} v^{\varepsilon})(x) \ = \ \frac{1}{\varepsilon^{2}} \int\limits_{\mathbb R} dz \  a (z) \lambda \Big( \frac{x}{\varepsilon} \Big) \mu \Big( \frac{x}{\varepsilon} -z \Big) \bigg\{ u_0(x-\varepsilon z)+ \varepsilon \varkappa_1 \Big(\frac{x}{\varepsilon}-z \Big)\cdot \nabla u_0 (x-\varepsilon z)
\\[4mm] \displaystyle
+\,\varepsilon^2 \varkappa_2 \Big( \frac{x}{\varepsilon}-z \Big)\cdot \nabla \nabla u_0(x-\varepsilon z) - u_0(x)-\varepsilon \varkappa_1 \Big( \frac{x}{\varepsilon} \Big)\cdot\nabla u_0(x) - \varepsilon^2 \varkappa_2 \Big(\frac{x}{\varepsilon} \Big)\cdot \nabla \nabla u_0(x) \bigg\}.
\end{array}
\end{equation}
Using the following identity based on the integral form of the remainder term in the Taylor expansion
$$
u(y) \ = \ u(x) + \int_0^1 \frac{\partial}{\partial t} \ u(x+(y-x)t) \ dt \ = \ u(x) + \int_0^1 \nabla u (x+  (y-x)t) \cdot (y-x) \ dt,
$$
$$
u(y) \ = \ u(x) + \nabla u(x) \cdot (y-x) + \int_0^1  \nabla \nabla u(x+(y-x)t) (y-x)\cdot (y-x) (1-t) \ dt
$$
which is valid for any $x, y \in \mathbb R^d$, we can rearrange (\ref{ml_1}) as follows
\begin{equation*}
\begin{array}{rl}
(\!\!\!&\!\!L^{\varepsilon} v^{\varepsilon})(x) =
\\[2mm]
&\!\!= \displaystyle
  \frac{1}{\varepsilon^{2}}\! \int\limits_{\mathbb R^d}\! dz \, a(z) \lambda \Big( \frac{x}{\varepsilon} \Big) \mu \Big( \frac{x}{\varepsilon}\! -\!z \Big)\! \bigg\{( u_0(x) - \varepsilon z\cdot\nabla u_0(x) + \varepsilon^2\! \int\limits_0^{1}\!  \nabla \nabla u_0(x-\varepsilon z t)\cdot z\!\otimes\!z\, (1\!-\!t) \, dt
\\[5mm]
&\!\!+\, \displaystyle\varepsilon \varkappa_1 \Big(\frac{x}{\varepsilon}-z \Big)\cdot \Big( \nabla u_0(x)\!-\!\varepsilon  \nabla \nabla u_0 (x)\,z + \varepsilon^2 \int_0^{1}  \nabla \nabla \nabla u_0(x-\varepsilon z t) z\!\otimes\!z (1-t) \ dt \Big) \\[4mm]
&\!\!+\,\displaystyle
\varepsilon^2 \varkappa_2 \Big( \frac{x}{\varepsilon}-z \Big)\cdot \nabla \nabla u_0(x-\varepsilon z) \ - \
u_0(x)-\varepsilon \varkappa_1 \Big( \frac{x}{\varepsilon} \Big)\cdot \nabla u_0(x) - \varepsilon^2 \varkappa_2 \Big(\frac{x}{\varepsilon} \Big)\cdot \nabla \nabla u_0(x) \bigg\},
\end{array}
\end{equation*}
where $z\otimes z=\{z^i z^j\}\big|_{i,j=1}^d$,\ $\ \nabla \nabla u_0(\cdot)z=\frac{\partial^2 u_0}{\partial x^i\partial x^j}(\cdot) z^j$, and $\nabla \nabla \nabla u_0(\cdot) z\!\otimes\!z =\frac{\partial^3 u_0}{\partial x^i\partial x^j\partial x^k}(\cdot) z^jz^k$. Collecting power-like terms in the last relation
we obtain
\begin{eqnarray}
(L^{\varepsilon} v^{\varepsilon})(x) \hskip -1.7cm &&\nonumber\\[1.6mm]
\label{K1_1}
&&\!\!\!\!\!=\, \frac{1}{\varepsilon} \lambda \Big( \frac{x}{\varepsilon} \Big)\nabla u_0(x)\! \cdot\! \int\limits_{\mathbb R^d}  \Big[ -z + \varkappa_1 \Big(\frac{x}{\varepsilon}-z \Big) - \varkappa_1 \Big(\frac{x}{\varepsilon}\Big) \Big]  a (z) \mu \Big( \frac{x}{\varepsilon} -z \Big) \, dz
\\[1mm]
\label{K2_1}
&&\!\!\!\!\! +\,\lambda \Big(\! \frac{x}{\varepsilon}\! \Big)\nabla \nabla u_0 (x)\!\cdot\!  \int\limits_{\mathbb R^d}\! \Big[ \frac12 z\!\otimes\!z\! - z \!\otimes\!\varkappa_1 \Big(\frac{x}{\varepsilon}\!-\!z \Big) + \varkappa_2 \Big( \frac{x}{\varepsilon}\! -\! z \Big) \!- \varkappa_2 \Big(\frac{x}{\varepsilon}\Big) \Big]   a (z) \mu \Big( \frac{x}{\varepsilon}\! -\!z \Big) \, dz
\\[0.5mm] \nonumber &&\!\!\!\!\!
 +\, \ \phi_\varepsilon (x) \hfill
\end{eqnarray}
with
\begin{equation}\label{14}
\begin{array}{rl} \displaystyle
\!\!\!\!&\hbox{ }\!\!\!\!\!\!\!\!\!\!\!\!\phi_\varepsilon (x) =\\[3mm]
& \!\!\!\!\!\!\!\!\displaystyle
 \varepsilon^{-2}\!\! \int\limits_{\mathbb R^d}\! dz \, a (z) \lambda \Big( \frac{x}{\varepsilon} \Big) \mu \Big( \frac{x}{\varepsilon}\! -\!z \Big)  \bigg\{ \varepsilon^2\! \int\limits_0^{1}  \nabla \nabla u_0(x-\varepsilon z t) \!\cdot\! z\!\otimes\!z \,(1-t) \ dt  - \frac{\varepsilon^2}2 \nabla \nabla u_0(x)\!\cdot\!  z\!\otimes\!z \
\\[4mm]  &\!\!\!\!\!\!\!\!\! \displaystyle
+\, \varepsilon^3 \varkappa_1 \Big(\frac{x}{\varepsilon}\!-\!z \Big)\!\cdot\! \int\limits_0^{1}\!  \nabla \nabla \nabla u_0(x\!-\!\varepsilon z t) z\!\otimes\!z (1\!-\!t) \, dt  \, - \, \varepsilon^3 \varkappa_2 \Big(\frac{x}{\varepsilon}\!-\!z \Big) \!\cdot\! \int\limits_0^{1}\!  \nabla \nabla \nabla u_0(x\!-\!\varepsilon z t)z  \, dt\!  \bigg\}
\end{array}
\end{equation}
Next we prove that  $\|\phi_\varepsilon\|\big._{L^2(\mathbb R^d)}$ is vanishing as $\varepsilon \to 0$.

\begin{lemma}\label{fi_0} Let $u_0 \in {\cal{S}}(\mathbb R^d)$, and assume that  $\lambda, \mu$ are periodic functions satisfying bounds (\ref{lm}), and all the components of $\varkappa_1$ and $\varkappa_2$ are elements of $ L^2(\mathbb T^d)$. Then
\begin{equation}\label{fi}
\| \phi_\varepsilon (x) \|\big._{L^2(\mathbb R^d)^d} \ \to \ 0, \quad \mbox{ as } \; \varepsilon \to 0.
\end{equation}
\end{lemma}

\begin{proof}
The first term on the right-hand side in (\ref{14}) (the term of order $\eps^0$) reads
\begin{equation}\label{dec1}
\begin{array}{rl}
\displaystyle
\phi_\varepsilon^{(1)}&\!\!\!\! \displaystyle
 (x)=
\\[3mm]
  =\!\!\!&\displaystyle \frac{1}{\varepsilon^{2}} \int\limits_{\mathbb R^d} \!dz \, a (z) \lambda \Big(\! \frac{x}{\varepsilon} \Big) \mu \Big( \frac{x}{\varepsilon} -z \Big) \varepsilon^2 \int\limits_0^{1}  \Big( \nabla \nabla u_0(x - \varepsilon z t) - \nabla \nabla u_0(x) \Big)\cdot z\!\otimes\!z (1-t) \, dt
\\[3mm]
=\!\!\!&\displaystyle
\int\limits_{|z| \le R} dz \ a (z) \lambda \Big( \frac{x}{\varepsilon} \Big) \mu \Big( \frac{x}{\varepsilon} -z \Big)
\int_0^{1} \ \Big( \nabla \nabla u_0(x - \varepsilon z t) - \nabla \nabla u_0(x) \Big)\cdot z\!\otimes\!z(1-t) \, dt
\\[3mm]
+\!\!\!&\displaystyle
 \int\limits_{|z| > R} dz \ a (z) \lambda \Big( \frac{x}{\varepsilon} \Big) \mu \Big( \frac{x}{\varepsilon} -z \Big)
\int_0^{1} \ \Big( \nabla \nabla u_0(x - \varepsilon z t) - \nabla \nabla u_0(x) \Big)\cdot z\!\otimes\!z (1-t) \ dt.
\\[9mm]
:=\!\!\!&\displaystyle \phi_\varepsilon^{(1, \le R)} (x)\,+\,\phi_\varepsilon^{(1, > R)} (x).
\end{array}
\end{equation}
Then
$$
\| \phi_\varepsilon^{(1, \le R)}  \|\big._{L^2(\mathbb R^d)} \ \le \ \alpha_2^2 \  \sup\limits_{|z| \le R} \| \nabla \nabla u_0(x - \varepsilon z t) - \nabla \nabla u_0(x) \|\big._{L^2(\mathbb R^d)^{d^2}} \int_{\mathbb R^d} |z|^2 \  a(z) \int_0^{1} (1-t) \, dt \, dz
$$
$$
=\,\frac{\alpha_2^2}{2} \sup\limits_{|z| \le  R} \| \nabla \nabla u_0(x - \varepsilon z t) - \nabla \nabla u_0(x) \|\big._{L^2(\mathbb R^d)^{d^2}} \int_{\mathbb R^d} |z|^2 \ a(z) \ dz
$$
and
$$
\| \phi_\varepsilon^{(1, > R)}  \|\big._{L^2(\mathbb R^d)} \ \le \ 2 \alpha_2^2 \|  \nabla \nabla u_0 (x) \|\big._{L^2(\mathbb R^d)^{d^2}} \int_{|z|>R} |z|^2 \  a(z) \ dz.
$$
If we take $R=R(\varepsilon) = \frac{1}{\sqrt{\varepsilon}}$, then both
$$
\| \phi_\varepsilon^{(1, \le R(\varepsilon))} \|\big._{L^2(\mathbb R^d)} \to 0 \quad  \mbox{and} \quad
\| \phi_\varepsilon^{(1, > R(\varepsilon))} \|\big._{L^2(\mathbb R^d)} \to 0, \quad \mbox{ as } \; \varepsilon \to 0.
$$
This yields
\begin{equation}\label{phi_1bis}
\| \phi_\varepsilon^{(1)} \|\big._{L^2(\mathbb R^d)} \to 0, \quad \mbox{ as } \; \varepsilon \to 0.
\end{equation}
For the second term on the right-hand side of \eqref{14}
$$
\phi_\varepsilon^{(2)}(x) = \varepsilon \int\limits_{\mathbb R^d} dz \, a (z) \lambda \Big( \frac{x}{\varepsilon} \Big) \mu \Big( \frac{x}{\varepsilon} -z \Big)   \varkappa_1 \Big(\frac{x}{\varepsilon}-z \Big)\!\cdot\!
\int_0^{1}  \nabla \nabla \nabla u_0(x-\varepsilon z t) z\!\otimes\!z\, (1-t) \, dt
$$
we have
\begin{equation}\label{phi_2}
\| \phi_\varepsilon^{(2)} (x) \|\big._{L^2(\mathbb R^d)} \, \le \, \frac{\varepsilon}{2}  \alpha_2^2 \, \sup\limits_{z, q \in \mathbb R^d}  \big\|  \varkappa_1 \Big(\frac{x}{\varepsilon}-z \Big) \  \nabla \nabla \nabla u_0 (x-\varepsilon z+q) \big\|_{L^2(\mathbb R^d)^{d^2}} \int\limits_{\mathbb R^d} |z|^2 \, a(z) \, dz.
\end{equation}
We estimate now $\sup\limits_{z, q \in \mathbb R^d}  \|  \varkappa_1 \big(\frac{x}{\varepsilon}-z \big) \,  \nabla \nabla \nabla u_0 (x-\varepsilon z+q) \|\big._{L^2(\mathbb R^d)^{d^2}}$. Taking $y=x-\varepsilon z$ and considering the fact that the function $\varkappa_1$ is periodic we get
$$
\sup\limits_{q \in \mathbb R^d}  \|  \varkappa_1 \Big(\frac{y}{\varepsilon} \Big) \ \nabla \nabla \nabla u_0 (y+q) \|\big._{L^2(\mathbb R^d)^{d^2}} = \sup\limits_{q \in \varepsilon \mathbb T^d}  \|  \varkappa_1 \Big(\frac{y}{\varepsilon} \Big) \ \nabla \nabla \nabla u_0 (y + q) \|\big._{L^2(\mathbb R^d)^{d^2}}.
$$
Let us show that this quantity admits a uniform in $\eps$ upper bound. Indeed, denoting $I_k (\varepsilon) = \varepsilon k + \varepsilon \mathbb T^d, \; k \in \mathbb Z^d$ with $\mathbb{T^d} = [0,1]^d$,  we have
$$
\sup\limits_{q \in \varepsilon \mathbb T^d}  \|  \varkappa_1 \Big(\frac{y}{\varepsilon} \Big) \ \nabla \nabla \nabla u_0 (y+q) \|\big.^2_{(L^2(\mathbb R^d))^{d^2}}
$$
$$
 \leq\,
\sup\limits_{q \in \varepsilon \mathbb T^d} \sum_{i,j,l,m=1}^d\sum_{k \in \mathbb Z^d} \int_{I_k(\varepsilon)}  \big[\varkappa^{i}_1 \Big(\frac{y}{\varepsilon} \Big)\big]^2 \,  \big[\partial_{x^j} \partial_{x^l} \partial_{x^m} u_0 (y+q)\big]^2 \ dy
$$
$$
\leq \,
\sum_{j,l,m=1}^d
\sum_{k \in \mathbb Z^d} \ \max_{y \in I_k (\varepsilon), \ q \in \varepsilon \mathbb T^d}  \big[\partial_{x^j} \partial_{x^l} \partial_{x^m} u_0 (y+q)\big]^2 \int_{I_k(\varepsilon)}  \varkappa^2_1 \Big(\frac{y}{\varepsilon} \Big) \ dy   \  =
$$
$$
\| \varkappa_1 \|\big.^2_{(L^2(\mathbb T^d))^d} \ \varepsilon^d\, \sum_{j,l,m=1}^d \sum_{k \in \mathbb Z^d} \max_{y \in I_k (\varepsilon), \ q \in \varepsilon \mathbb T^d} \big[\partial_{x^j} \partial_{x^l} \partial_{x^m} u_0 (y+q)\big]^2 \, \longrightarrow
$$
$$
\longrightarrow    \| \varkappa_1 \|\big.^2_{(L^2(\mathbb T^d))^d} \sum_{j,l,m=1}^d \| \partial_{x^j} \partial_{x^l} \partial_{x^m} u_0 \|^2_{L^2(\mathbb R^d)},
$$
as $\varepsilon \to 0$. Here we have used the fact that for a functions $\psi \in {\cal{S}}(\mathbb R^d)$
$$
\varepsilon^d \ \sum_{k \in \mathbb Z^d} \max_{y \in I_k (\varepsilon), \ q \in \varepsilon \mathbb T^d}  \psi (y+q) \ \to \   \int\limits_{\mathbb R^d} \psi(x) \ dx, \quad \varepsilon \to 0.
$$
Thus from estimate (\ref{phi_2}) it follows that $\| \phi_\varepsilon^{(2)} \|\big._{L^2(\mathbb R^d)} \to 0$, as $ \varepsilon \to 0$.

Similarly for the third term on the right-hand side of (\ref{14}) we have
$$
\| \phi_\varepsilon^{(3)}(x) \|_{L^2(\mathbb R^d)} \,= \hskip 10cm
$$
\begin{equation}\label{phi_3}
= \, \varepsilon \, \Big\|  \int\limits_{\mathbb R^d} dz \ a (z) \lambda \Big( \frac{x}{\varepsilon} \Big) \mu \Big( \frac{x}{\varepsilon} -z \Big)  \varkappa_2
\Big(\frac{x}{\varepsilon}-z \Big)\!\cdot\! \int_0^1 \nabla \nabla \nabla u_0(x-\varepsilon z t) \ z  \ dt \Big\|_{L^2(\mathbb R^d)}
\end{equation}
$$
\leq \varepsilon c(d) \alpha_2^2 \left( \int\limits_{\mathbb R^d} |z| \ a (z) \ dz  \right) \ \sup\limits_{q \in \mathbb R^d}  \|  \varkappa_2 \Big(\frac{y}{\varepsilon} \Big) \ \nabla \nabla \nabla u_0 (y+q) \|\big._{(L^2(\mathbb R^d))^d} \ \le \ \varepsilon C_3
$$
for all sufficiently small $\varepsilon$, since as above we have for all $i,j,l,m,n$
$$
\| \varkappa^{i n}_2 \Big(\frac{y}{\varepsilon} \Big) \  \partial_{x^j} \partial_{x^l} \partial_{x^m} u_0 (y+q) \|^2_{L^2(\mathbb R^d)} \ \to \    \| \varkappa^{i n}_2 \|^2_{L^2(\mathbb T^d)} \| \partial_{x^j} \partial_{x^l} \partial_{x^m} u_0 \|^2_{L^2(\mathbb R^d)},
$$
as $\varepsilon \to 0$, uniformly in $q\in\mathbb R^d$.

\end{proof}

Our next step of the proof deals with constructing the correctors $\varkappa_1$ and $\varkappa_2$.

Denote $\xi=\frac{x}{\varepsilon}$ a variable on the period: $\xi \in \mathbb{T}^d = [0,1]^d$, then $\lambda(\xi), \mu(\xi), \varkappa_1(\xi), \varkappa_2(\xi)$ are functions on $\mathbb T^d$ and (\ref{K1_1}) - (\ref{K2_1}) can be understood as equations for the functions  $\varkappa_1(\xi), \varkappa_2(\xi), \ \xi \in \mathbb T^d$ on the torus.

We collect all the terms of the order $\varepsilon^{-1}$ in (\ref{K1_1}) and equate them to 0. This yields the following equation for the vector function $\varkappa_1(\xi) = \{ \varkappa_1^i (\xi) \}, \ \xi \in \mathbb T^d, \; i=1, \ldots, d,$ as unknown function:
\begin{equation}\label{K1_2}
\int\limits_{\mathbb R^d}  \Big(  -z^i + \varkappa^i_1 (\xi-z) - \varkappa^i_1 (\xi) \Big) \ a (z) \mu (\xi -z ) \ dz = 0 \quad \forall \ i = 1, \ldots, d.
\end{equation}
Here $\varkappa_1(q), \ q \in \mathbb R^d$, is the periodic extension of  $\varkappa_1(\xi), \ \xi \in \mathbb T^d$. Notice that (\ref{K1_2}) is a system of uncoupled equations.
After change of variables  $q=\xi-z \in \mathbb R^d$ equation (\ref{K1_2}) can be written in the vector form as follows
\begin{equation}\label{kappa_1bis}
\int\limits_{\mathbb R^d}  a (\xi-q) \mu (q)  (\varkappa_1 (q) - \varkappa_1 (\xi) ) \ dq \ = \ \int\limits_{\mathbb R^d}  a (\xi-q) (\xi - q ) \mu (q ) \ dq,
\end{equation}
or
\begin{equation}\label{kappa_1}
A \varkappa_1 \ = \ f
\end{equation}
with the operator $A$ in $(L^2(\mathbb T^d))^d$ defined by
\begin{equation}\label{Akappa}
(A \bar\varphi) (\xi) \ = \ \int\limits_{\mathbb R^d}  a (\xi-q) \mu (q)  (\bar\varphi (q) - \bar\varphi (\xi) ) \ dq \ = \ \int\limits_{\mathbb T^d}  \hat a (\xi-\eta) \mu (\eta)  (\bar\varphi (\eta) - \bar\varphi (\xi) ) \ d\eta,
\end{equation}
and
\begin{equation}\label{hata}
\hat a(\eta) \ = \ \sum_{k \in \mathbb Z^d} a(\eta +k), \quad \eta \in \mathbb T^d.
\end{equation}
Observe that the vector function
\begin{equation}\label{f}
f(\xi) \ = \ \int\limits_{\mathbb R^d}  a (\xi-q) \mu (q )  (\xi - q ) \ dq \ \in \ (L^2(\mathbb T^d))^d,
\end{equation}
because the function $f(\xi)$ is bounded for all $\xi \in \mathbb T^d$:
$$
\left| \int\limits_{\mathbb R^d}  a (\xi-q) (\xi - q ) \mu (q) \ dq \right| \ \le \ \alpha_2 \int\limits_{\mathbb R^d}  a (z) |z| \ dz \ < \ \infty.
$$
In \eqref{kappa_1} the operator $A$ applies component-wise.  In what follows, abusing slightly the notation, we use the same notation $A$ for the scalar operator in $L^2(\mathbb T^d)$ acting on each component in \eqref{kappa_1}.

Let us denote
$$
K \varphi (\xi)=   \int\limits_{\mathbb R^d}  a (\xi-q) \mu (q) \varphi (q)  \, dq,\qquad
\varphi \in L^2(\mathbb T^d).
$$

\begin{lemma}\label{compact}
The operator
\begin{equation}\label{Kkappa}
K \varphi (\xi) \ = \  \int\limits_{\mathbb R^d}  a (\xi-q) \mu (q) \varphi (q)  \ dq \ = \   \int\limits_{\mathbb T^d}  \hat a (\xi-\eta) \mu (\eta) \varphi(\eta) \ d\eta, \quad  \varphi \in L^2(\mathbb T^d),
\end{equation}
is a compact operator in $L^2 (\mathbb T^d)$.
\end{lemma}

\begin{proof}
First we prove that $K$ is the bounded operator in $L^2 (\mathbb T^d)$. The set of bounded functions $\mathbf B (\mathbb T^d) \subset L^2 (\mathbb T^d)$ is dense in $L^2 (\mathbb T^d)$. Let $\varphi \in \mathbf B (\mathbb T^d)$, then the integral
$$
\bigg|\int\limits_{\mathbb R^d}  a (\xi-q) \mu (q) \varphi (q) \ dq\bigg| \ \le \  \alpha_2 \ a_1 \ \max |\varphi (q)|
$$
is bounded. Using Fubini's theorem and denoting $ w(q) = \mu(q) \varphi (q)$ we get
\begin{equation}\label{Kw}
\begin{array}{c}
\displaystyle
\|K \varphi \|^2_{L^2 (\mathbb T^d)} \ = \ \int\limits_{\mathbb T^d} \int\limits_{\mathbb R^d}  a (q-\xi)  w (q)  \ dq \  \int\limits_{\mathbb R^d}  a (q'-\xi)  w (q')  \ dq' \ d\xi
\\[4mm] \displaystyle
=\,\int\limits_{\mathbb R^d} \int\limits_{\mathbb R^d}  a (z) a(z') \left( \int\limits_{\mathbb T^d}  w (\xi+z)  w(\xi+z') d\xi \right) dz \ dz'
 \\[4mm] \displaystyle
 \leq \|  w \|^2_{L^2 (\mathbb T^d)} \left( \int\limits_{\mathbb R^d}  a (z) dz \right)^2 \le \alpha_2^2 \|a\|^2_{L^1(\mathbb R^d)}  \| \varphi \|^2_{L^2 (\mathbb T^d)}.
\end{array}
\end{equation}
 Consequently the operator $K$ can be expanded on $L^2 (\mathbb T^d) $ and we have:
$$
\|K \varphi \|_{L^2 (\mathbb T^d)} \ \le \ \alpha_2 \| a\|_{L^1 (\mathbb R^d)}  \| \varphi \|_{L^2 (\mathbb T^d)}, \quad \varphi \in L^2 (\mathbb T^d),
$$
or
\begin{equation}\label{K_norm}
\|K  \|_{\mathcal{L}(L^2 (\mathbb T^d),L^2 (\mathbb T^d))} \ \le \ \alpha_2  \| a \|_{L^1 (\mathbb R^d)}.
\end{equation}
\\
To prove the compactness of $K$ we consider approximations of $K$ by the following compact operators:
$$
(K_N \varphi) (\xi) \ = \  \int\limits_{\mathbb R^d}  a_N (\xi-q) \mu (q) \varphi (q)  \ dq \quad \mbox{ with } \;
a_N(z) = a(z) \cdot \chi_{[-N,N]^d}(z).
$$
Since $a-a_N \in L^1 (\mathbb R^d)$, then using (\ref{K_norm}) we get
$$
\|K-K_N \|_{L^2 (\mathbb T^d)}  \le   \alpha_2 \| a-a_N \|_{L^1 (\mathbb R^d)}.
$$
Consequently, $\| K-K_N \|_{\mathcal{L}(L^2 (\mathbb T^d),L^2 (\mathbb T^d))} \to 0$, as $N \to \infty$, and $K$ is a compact operator as the limit of the compact operators $K_N$.
\end{proof}

The operator
\begin{equation}\label{Gkappa}
G \varphi (\xi) \ = \  \varphi (\xi) \ \int\limits_{\mathbb R^d}  a (\xi-q) \mu (q)  \ dq \ = \ \varphi (\xi)  \int\limits_{\mathbb T^d}  \hat a (\xi-\eta) \mu (\eta)  \ d\eta, \quad \varphi \in L^2 (\mathbb T^d),
\end{equation}
is the operator of multiplication by the function $G(\xi) = \int\limits_{\mathbb R^d}  a (\xi-q) \mu (q)  \ dq$. Observe that
$$
0 < g_0 \le G(\xi) \le g_2 <\infty.
$$
Thus, the operator $A$ in (\ref{Akappa}) is the sum $A=G+K$, where $G$ and $K$ were defined in (\ref{Gkappa}) and (\ref{Kkappa}). Therefore $A$ is the sum of a positive reversible operator $G$ and a compact operator $K$, and the Fredholm theorem applies to (\ref{kappa_1}). It is easy to see that $Ker \ A^\ast = \{ \mu (\xi) \}$, then the solvability condition for (\ref{kappa_1}) takes the form:
\begin{equation}\label{FA1}
\int\limits_{\mathbb T^d} f(\xi) \mu(\xi) \ d \xi \  = \ 0.
\end{equation}
The validity of condition (\ref{FA1}) for the function $f$ defined in (\ref{f}) immediately follows from Lemma (\ref{l_1}).

\begin{lemma}\label{l_1}
For any periodic functions $\mu(y), \ \lambda(y), \; y \in \mathbb R^d$ we have: \\
if $a(x-y)=a(y-x)$, then
\begin{equation}\label{L1_1}
\int\limits_{\mathbb R^d} \int\limits_{\mathbb T^d}  a (x-y) \mu (y ) \lambda (x) \ dx \ dy = \int\limits_{\mathbb R^d} \int\limits_{\mathbb T^d}  a (x-y) \mu (x) \lambda (y) \ dx \ dy;
\end{equation}
if $b(x-y)= - b (y-x)$, then
\begin{equation}\label{L1_2}
\int\limits_{\mathbb R^d} \int\limits_{\mathbb T^d}  b (x-y) \mu (y ) \lambda (x) \ dx \ dy = - \int\limits_{\mathbb R^d} \int\limits_{\mathbb T^d}  b (x-y) \mu (x) \lambda (y) \ dx \ dy.
\end{equation}
\end{lemma}

\begin{proof}
Using periodicity of $\mu$ and $\lambda$ we get for any $z \in \mathbb R^d$:
$$
\int\limits_{\mathbb T^d}  \mu (z+x) \lambda (x) \ dx = \int\limits_{\mathbb T^d}  \mu (u) \lambda (u-z) \ du.
$$
Consequently, using the relation $a(x-y)=a(y-x)$ we have
$$
\int\limits_{\mathbb R^d} \int\limits_{\mathbb T^d}  a (y-x) \mu (y ) \lambda (x) \ dx \ dy = \int\limits_{\mathbb R^d} \int\limits_{\mathbb T^d}  a (z) \mu (z+x) \lambda (x) \ dx \ dz = \int\limits_{\mathbb R^d} \int\limits_{\mathbb T^d}  a (x-y) \mu (x) \lambda (y) \ dx \ dy.
$$
Similarly using that $b(x-y)= - b (y-x)$ we get
$$
\int\limits_{\mathbb R^d} \int\limits_{\mathbb T^d}  b (x-y) \mu (y ) \lambda (x) \ dx \ dy = - \int\limits_{\mathbb R^d} \int\limits_{\mathbb T^d}  b (y-x) \mu (y ) \lambda (x) \ dx \ dy =
$$
$$
- \int\limits_{\mathbb R^d} \int\limits_{\mathbb T^d}  b (z) \mu (z+x) \lambda (x) \ dx \ dz = - \int\limits_{\mathbb R^d} \int\limits_{\mathbb T^d}  b (x-y) \mu (x) \lambda (y) \ dx \ dy.
$$
\end{proof}

Thus, the solution $\varkappa_1(\xi)$ of equation (\ref{kappa_1}) exists  and is unique up to a constant vector. In order to fix the choice of this vector we assume that the average of each component of $\varkappa_1(\xi)$ over the period is equal to 0. \\

At the next step we collect in (\ref{K2_1}) the terms of the order $\varepsilon^0$. Our goal is to find the matrix function $\varkappa_2(\xi) = \{ \varkappa_2^{ij}(\xi) \}, \; \varkappa_2^{ij} \in L^2 (\mathbb T^d)$,  such
that the sum of these terms will be equal to
$$
\sum\limits_{i,j = 1}^d  \Theta^{ij} \ \frac{\partial^2 u_0 (x)}{\partial x^i \partial x^j}
$$
with a
constant matrix $\Theta = \{\Theta^{ij} \}$. Let us notice that in this sum  only the symmetric part of the matrix $\Theta$ matters. This leads to the following equation for the functions $\varkappa^{ij}_2(\xi)$ for any $i,j = 1, \ldots, d$:
\begin{equation}\label{kappa_2}
(A \varkappa_2^{ij})(\xi) \ = \ \frac{\Theta^{ij}}{\lambda(\xi)} \ - \ \int\limits_{\mathbb R^d} a(z) \mu (\xi -z) \left(\frac12 z^i z^j - z^i \varkappa^{j}_1 (\xi -z) \right) \ dz,
\end{equation}
where $A$ is the same operator as in (\ref{Akappa}). Thus the matrix $\Theta$ is determined from the following solvability condition for equation (\ref{kappa_2}):
\begin{equation}\label{FA2}
\begin{array}{rl}
\displaystyle
\Theta^{i j} \int\limits_{\mathbb T^d} \frac{\mu(\xi)}{\lambda(\xi)} \ d\xi = \tilde \Theta^{i j} \!\!\!\!\!&\displaystyle=  \int\limits_{\mathbb T^d}
\int\limits_{\mathbb R^d}  \frac12 (\xi - q )^i (\xi - q )^j a (\xi-q) \mu (q ) \mu(\xi) \ dq \ d\xi
\\[7mm]
&\displaystyle
-\, \int\limits_{\mathbb T^d}   \int\limits_{\mathbb R^d}   a (\xi-q) \mu (q ) \mu(\xi) (\xi - q )^i \varkappa_1^j (q) \ dq \ d\xi
 \end{array}
\end{equation}
for any $i, j$.

\begin{lemma}\label{l_2}
The integrals on the right-hand side of (\ref{FA2}) converge. Moreover the symmetric part of the matrix
$\Theta = \{ \Theta^{i j} \}$ defined in (\ref{FA2}) is positive definite.
\end{lemma}

\begin{proof}
The first statement of the lemma immediately follows from the existence of the second moment of the function $a(z)$.
Since the integral $ \int\limits_{\mathbb T^d} \frac{\mu(\xi)}{\lambda(\xi)} \ d\xi$ equals  a positive constant, it is sufficient to prove that the symmetric part  of the right-hand side of (\ref{FA2}) is positive definite. To this end we consider the following integrals, symmetric for all $i,j$:
\begin{equation}\label{I}
I^{ij} \ = \ \int\limits_{\mathbb T^d}   \int\limits_{\mathbb R^d}  a (\xi-q) \mu (q ) \mu(\xi) \Big(  (\xi - q )^i + ( \varkappa_1 (\xi) - \varkappa_1 (q))^i \Big)  \Big(  (\xi - q )^j + ( \varkappa_1 (\xi) - \varkappa_1 (q))^j \Big) \ dq \ d\xi,
\end{equation}
and prove that the symmetric part of the right-hand side of (\ref{FA2}) is equal to $I$:
\begin{equation}\label{Ibis}
I^{i j} \ = \ \tilde\Theta^{i j} + \tilde\Theta^{j i} \ =  \int\limits_{\mathbb T^d}   \int\limits_{\mathbb R^d}  (\xi - q )^i (\xi - q )^j a (\xi-q) \mu (q ) \mu(\xi) \ dq \ d\xi \ -
\end{equation}
$$
\int\limits_{\mathbb T^d}   \int\limits_{\mathbb R^d}   a (\xi-q) \mu (q ) \mu(\xi) (\xi - q )^i \varkappa_1^j (q) \ dq \ d\xi -
\int\limits_{\mathbb T^d}   \int\limits_{\mathbb R^d}   a (\xi-q) \mu (q ) \mu(\xi) (\xi - q )^j \varkappa_1^i (q) \ dq \ d\xi.
$$
Using (\ref{L1_2}) we have
$$
\int\limits_{\mathbb T^d}   \int\limits_{\mathbb R^d} (\xi - q )^i a (\xi-q) \mu (q ) \mu(\xi) \varkappa^j_1 (\xi) \ dq \ d\xi = - \int\limits_{\mathbb T^d}   \int\limits_{\mathbb R^d} (\xi - q)^i a (\xi-q) \mu (q ) \mu(\xi) \varkappa^j_1 (q) \ dq \ d\xi.
$$
Consequently,
\begin{equation}\label{last}
\int\limits_{\mathbb T^d}   \int\limits_{\mathbb R^d}  a (\xi-q) \mu (q ) \mu(\xi)  (\xi - q)^i ( \varkappa_1 (\xi) - \varkappa_1 (q))^j \ dq \ d\xi \ = \ - 2 \int\limits_{\mathbb T^d}   \int\limits_{\mathbb R^d} (\xi - q)^i a (\xi-q) \mu (q) \mu(\xi) \varkappa^j_1 (q) \ dq \ d\xi.
\end{equation}
Further, combining equation (\ref{kappa_1bis}) on $\varkappa_1$  with (\ref{L1_1})-(\ref{L1_2}), we get
$$
\int\limits_{\mathbb T^d}   \int\limits_{\mathbb R^d}  a (\xi-q) \mu (q) \mu(\xi)  ( \varkappa_1 (\xi) - \varkappa_1 (q))^i \varkappa^j_1 (\xi)  \ dq \ d\xi  =
$$
$$
\int\limits_{\mathbb T^d}  \mu(\xi)  \varkappa^j_1 (\xi)  \  \int\limits_{\mathbb R^d}  a (\xi-q) \mu (q )  ( \varkappa_1 (\xi) - \varkappa_1 (q))^i \ dq \ d\xi =
$$
$$
- \int\limits_{\mathbb T^d} \int\limits_{\mathbb R^d}  a (\xi-q)  (\xi-q)^i \mu (q) \mu(\xi)  \varkappa^j_1 (\xi)  \ dq \ d\xi  =  \int\limits_{\mathbb T^d}   \int\limits_{\mathbb R^d}  a (\xi-q) (\xi- q)^i \mu (q ) \mu(\xi)  \varkappa^j_1 (q)  \ dq \ d\xi,
$$
and
$$
- \int\limits_{\mathbb T^d}   \int\limits_{\mathbb R^d}  a (\xi-q) \mu (\xi ) \mu(q)  ( \varkappa_1 (\xi) - \varkappa_1 (q))^i \varkappa^j_1 (q)  \ dq \ d\xi \,=
$$
$$
 -
\int\limits_{\mathbb T^d}   \int\limits_{\mathbb R^d}  a (\xi-q) \mu (q ) \mu(\xi)  ( \varkappa_1 (q) - \varkappa_1 (\xi))^i \varkappa^j_1 (\xi)  \ dq \ d\xi  =
$$
$$
\int\limits_{\mathbb T^d}   \int\limits_{\mathbb R^d}  a (\xi-q) \mu (q ) \mu(\xi)  ( \varkappa_1 (\xi) - \varkappa_1 (q))^i \varkappa^j_1 (\xi)  \ dq \ d\xi =  \int\limits_{\mathbb T^d}   \int\limits_{\mathbb R^d}  a (\xi-q) (\xi- q)^i \mu (q ) \mu(\xi)  \varkappa^j_1 (q)  \ dq \ d\xi.
$$
Thus
$$
\int\limits_{\mathbb T^d}   \int\limits_{\mathbb R^d}  a (\xi-q) \mu (\xi) \mu(q) ( \varkappa_1 (\xi) - \varkappa_1 (q))^i  ( \varkappa_1 (\xi) - \varkappa_1 (q))^j \ dq \ d\xi \ =
$$
$$
\int\limits_{\mathbb T^d}   \int\limits_{\mathbb R^d}  a (\xi-q) \mu (\xi ) \mu(q) \Big( (\xi - q)^i \varkappa^j_1 (q) + (\xi - q)^j \varkappa^i_1 (q) \Big) \ dq \ d\xi,
$$
which together with (\ref{I}) and (\ref{last}) implies (\ref{Ibis}).

The structure of (\ref{I}) implies that $(Iv,v) \ge 0, \; \forall \ v \in \mathbb R^d$, and moreover $(Iv,v)>0$ since $\varkappa_1(q)$ is the periodic function while $q$ is the linear function, consequently
$\big[ \big( (\xi - q ) + ( \varkappa_1 (\xi) - \varkappa_1 (q))\big)\cdot v \big]^2 $ can not be identically 0 if $v\not=0$.
\end{proof}

Thus equality (\ref{mle}) follows from (\ref{ml_1}) - (\ref{K2_1}), (\ref{K1_2}) and (\ref{kappa_2}).

The main lemma is proved.
\end{proof}

\section{Proof of Theorem \ref{T1}}
\label{s_prooft1}

\begin{lemma}\label{l_3}
If $\varkappa_1$ and $\varkappa_2$ are the solutions of (\ref{kappa_1}) and (\ref{kappa_2}) respectively, then for all $f \in {\cal S}(\mathbb R^d)$
$$
\| v^{\varepsilon} - u^{\varepsilon}\|_{L^2(\mathbb R^d)} \to 0
$$
as $\varepsilon \to 0$.
\end{lemma}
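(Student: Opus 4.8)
The plan is to turn Lemma~\ref{ml} into an equation of the form $(L^\eps-m)v^\eps=f+\psi_\eps$ with a right-hand side perturbation $\psi_\eps$ vanishing in $L^2$, and then to invert $L^\eps-m$ using a bound on $\|(L^\eps-m)^{-1}\|$ that is uniform in $\eps$. \emph{Step 1: the residual equation.} Since $u_0$ solves \eqref{u_0}, we have $\Theta^{ij}\partial^2_{x^ix^j}u_0=f+m\,u_0$; substituting this into \eqref{mle} gives
\[
(L^\eps-m)v^\eps \ = \ f + m\,(u_0-v^\eps) + \phi_\eps \ =: \ f + \psi_\eps .
\]
By Lemma~\ref{fi_0}, $\|\phi_\eps\|_{L^2(\mathbb R^d)}\to 0$. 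For the remaining term, \eqref{v_eps} gives $u_0-v^\eps=-\eps\,\varkappa_1(x/\eps)\!\cdot\!\nabla u_0-\eps^2\,\varkappa_2(x/\eps)\!\cdot\!\nabla\nabla u_0$, and the same scaling argument already used in the proof of Lemma~\ref{fi_0} (periodicity of $\varkappa_1,\varkappa_2\in L^2(\mathbb T^d)$ together with $u_0\in\mathcal S(\mathbb R^d)$) shows that $\|\varkappa_1(x/\eps)\!\cdot\!\nabla u_0\|_{L^2(\mathbb R^d)}$ and $\|\varkappa_2(x/\eps)\!\cdot\!\nabla\nabla u_0\|_{L^2(\mathbb R^d)}$ stay bounded as $\eps\to0$; hence $\|u_0-v^\eps\|_{L^2(\mathbb R^d)}=O(\eps)$ and $\|\psi_\eps\|_{L^2(\mathbb R^d)}\to 0$.

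\emph{Step 2: reduction to a uniform resolvent bound.} Subtracting $(L^\eps-m)u^\eps=f$, see \eqref{u_eps}, from the identity of Step~1 yields $(L^\eps-m)(v^\eps-u^\eps)=\psi_\eps$, i.e. $v^\eps-u^\eps=(L^\eps-m)^{-1}\psi_\eps$. Thus it suffices to prove that $\|(L^\eps-m)^{-1}\|_{\mathcal L(L^2(\mathbb R^d),L^2(\mathbb R^d))}$ is bounded uniformly in $\eps\in(0,1]$.

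\emph{Step 3: the uniform bound via a weighted space.} Introduce the weight $\rho^\eps(x)=\mu(x/\eps)/\lambda(x/\eps)$; by \eqref{lm} one has $\alpha_1/\alpha_2\le\rho^\eps(x)\le\alpha_2/\alpha_1$, so the norms of $L^2(\mathbb R^d,\rho^\eps\,dx)$ and $L^2(\mathbb R^d)$ are equivalent with $\eps$-independent constants. In this weighted space $L^\eps$ is symmetric: the kernel of the bilinear form $\langle L^\eps u,v\rangle_{\rho^\eps}$ is $\eps^{-d-2}\lambda(x/\eps)\rho^\eps(x)\mu(y/\eps)a((x-y)/\eps)=\eps^{-d-2}\mu(x/\eps)\mu(y/\eps)a((x-y)/\eps)$, which is symmetric in $x\leftrightarrow y$ since $a$ is even, and the usual symmetrization then gives
\[
\langle L^\eps u,u\rangle_{\rho^\eps} \ = \ -\frac{1}{2\eps^{d+2}}\int_{\mathbb R^d}\!\int_{\mathbb R^d}\mu\Big(\tfrac{x}{\eps}\Big)\mu\Big(\tfrac{y}{\eps}\Big)a\Big(\tfrac{x-y}{\eps}\Big)\big(u(x)-u(y)\big)^2\,dx\,dy \ \le \ 0 .
\]
Hence $L^\eps$, a bounded symmetric non-positive operator on $L^2(\mathbb R^d,\rho^\eps\,dx)$, is self-adjoint and non-positive there, so $\|(L^\eps-m)^{-1}\|_{L^2(\rho^\eps dx)}\le 1/m$, and by the norm equivalence $\|(L^\eps-m)^{-1}\|_{\mathcal L(L^2(\mathbb R^d))}\le \alpha_2/(\alpha_1 m)$ for all $\eps$. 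Combined with Steps~1--2 this gives $\|v^\eps-u^\eps\|_{L^2(\mathbb R^d)}\le \frac{\alpha_2}{\alpha_1 m}\|\psi_\eps\|_{L^2(\mathbb R^d)}\to 0$.

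\emph{Main obstacle.} Steps~1 and~2 are essentially bookkeeping on top of Lemmas~\ref{ml} and~\ref{fi_0}; the one genuinely non-routine point is Step~3, namely spotting the $\eps$-dependent weight $\mu(\cdot/\eps)/\lambda(\cdot/\eps)$ that simultaneously symmetrizes $L^\eps$, makes it non-positive, and has two-sided bounds independent of $\eps$. This is exactly what compensates for the non-compactness of the resolvents $(L^\eps-m)^{-1}$ stressed in the introduction and lets one pass from smallness of the residual $\psi_\eps$ to smallness of $v^\eps-u^\eps$.
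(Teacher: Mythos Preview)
Your proof is correct and follows exactly the paper's approach: derive $(L^\eps-m)v^\eps=f+\tilde\phi_\eps$ from Lemma~\ref{ml} and the definition of $u_0$, then invert using a uniform bound on $(L^\eps-m)^{-1}$. The only difference is that the paper simply asserts $\|(L^\eps-m)^{-1}\|\le C$ at this point, whereas you supply a proof via the weight $\mu(\cdot/\eps)/\lambda(\cdot/\eps)$; this is precisely the weight the paper itself invokes in Section~\ref{s_semigroup} to make $L^\eps$ symmetric and non-positive, so your Step~3 is not a different route but the natural justification the paper postpones.
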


\begin{proof}
We have from (\ref{mle}) that
$$
L^{\varepsilon} v^{\varepsilon} = \Theta \ \nabla \nabla u_0 + \phi_\varepsilon,
$$
where $\|\phi_\varepsilon \|_{L^2(\mathbb R^d)}  \to 0$ as $\varepsilon \to 0$. Then
$$
(L^{\varepsilon} - m) v^{\varepsilon} + m (v^{\varepsilon} - u_0) = \Theta \ \nabla \nabla u_0 - m u_0 + \phi_\varepsilon = f + \phi_\varepsilon.
$$
Since $\|v^{\varepsilon} - u_0 \|_{L^2(\mathbb R^d)}  \to 0$ due to representation (\ref{v_eps}), we get
\begin{equation}\label{v1}
(L^{\varepsilon} - m) v^{\varepsilon} = f + \tilde \phi_\varepsilon \quad \mbox{ with } \; \| \tilde \phi_\varepsilon \|_{L^2(\mathbb R^d)}  \to 0.
\end{equation}
For the operator $(L^{\varepsilon} - m)^{-1}$ we have
$$
\| (L^{\varepsilon} - m)^{-1} \|_{\mathcal{L}(L^2(\mathbb R^d),L^2(\mathbb R^d))} \ \le \ C
$$
with $C$ being independent of $\varepsilon$. Then using (\ref{v1}) we obtain
$$
u^{\varepsilon} = (L^{\varepsilon} - m)^{-1} f = (L^{\varepsilon} - m)^{-1} \Big( (L^{\varepsilon} - m) v^{\varepsilon} - \tilde \phi_\varepsilon \Big) = v^{\varepsilon} - (L^{\varepsilon} - m)^{-1} \tilde \phi_\varepsilon,
$$
and
$$
\| v^{\varepsilon} - u^{\varepsilon}\|_{L^2(\mathbb R^d)}  = \| (L^{\varepsilon} - m)^{-1} \tilde \phi_\varepsilon \|_{L^2(\mathbb R^d)}  \to 0.
$$
\end{proof}

\begin{corollary}\label{C1}
$$
\| u^{\varepsilon} - u_0 \|_{L^2(\mathbb R^d)} \to 0 \quad \mbox{ as } \; \varepsilon \to 0,
$$
i. e. (\ref{convergence1}) holds for any $f \in \cal{S}$.
\end{corollary}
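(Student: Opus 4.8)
The plan is that this is a direct bookkeeping consequence of Lemma \ref{l_3} and the triangle inequality, so the only thing to check carefully is that $v^\varepsilon$ itself converges to $u_0$ in $L^2(\mathbb R^d)$. First I would recall from the definition (\ref{v_eps}) that
$$
v^\varepsilon(x) - u_0(x) \ = \ \varepsilon\,\varkappa_1\Big(\frac{x}{\varepsilon}\Big)\cdot\nabla u_0(x) \ + \ \varepsilon^2\,\varkappa_2\Big(\frac{x}{\varepsilon}\Big)\cdot\nabla\nabla u_0(x),
$$
and then estimate the $L^2(\mathbb R^d)$ norm of each term on the right. Since $\varkappa_1\in(L^2(\mathbb T^d))^d$ and $\varkappa_2\in(L^2(\mathbb T^d))^{d^2}$ are periodic and $u_0\in\mathcal S(\mathbb R^d)$ (because $f\in\mathcal S$ and $L^0$ has constant coefficients), the quantities $\|\varkappa_1(\cdot/\varepsilon)\cdot\nabla u_0\|_{L^2(\mathbb R^d)}$ and $\|\varkappa_2(\cdot/\varepsilon)\cdot\nabla\nabla u_0\|_{L^2(\mathbb R^d)}$ are bounded uniformly in $\varepsilon$; indeed this is exactly the cell-splitting argument already carried out in the proof of Lemma \ref{fi_0} (partition $\mathbb R^d$ into the cubes $I_k(\varepsilon)=\varepsilon k+\varepsilon\mathbb T^d$, use periodicity of $\varkappa_i$ and the fact that $|\nabla u_0|^2$, $|\nabla\nabla u_0|^2$ are Schwartz, hence integrable, to obtain convergence of the Riemann-type sums to $\|\varkappa_1\|^2_{(L^2(\mathbb T^d))^d}\|\nabla u_0\|^2_{L^2(\mathbb R^d)}$ and $\|\varkappa_2\|^2_{(L^2(\mathbb T^d))^{d^2}}\|\nabla\nabla u_0\|^2_{L^2(\mathbb R^d)}$ respectively). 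Multiplying these uniform bounds by $\varepsilon$ and $\varepsilon^2$ we conclude that $\|v^\varepsilon-u_0\|_{L^2(\mathbb R^d)}\to 0$.

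It then remains only to write
$$
\| u^\varepsilon - u_0 \|_{L^2(\mathbb R^d)} \ \le \ \| u^\varepsilon - v^\varepsilon \|_{L^2(\mathbb R^d)} \ + \ \| v^\varepsilon - u_0 \|_{L^2(\mathbb R^d)},
$$
where the first term on the right tends to $0$ by Lemma \ref{l_3} and the second by the estimate just described. This yields $\| u^\varepsilon - u_0 \|_{L^2(\mathbb R^d)}\to 0$, i.e. (\ref{convergence1}) for every $f\in\mathcal S(\mathbb R^d)$.

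I do not expect any genuine obstacle here: the corollary is simply the combination of the main lemma (which produced $v^\varepsilon$ with $L^\varepsilon v^\varepsilon=\Theta\nabla\nabla u_0+\phi_\varepsilon$) and the uniform resolvent bound used in Lemma \ref{l_3}. The only point requiring a word of justification is the uniform-in-$\varepsilon$ $L^2$ control of the corrector products $\varkappa_i(\cdot/\varepsilon)\cdot\nabla^{(i)}u_0$, and that is already essentially established inside the proof of Lemma \ref{fi_0}, so it can be invoked rather than redone.
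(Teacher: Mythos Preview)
Your proposal is correct and matches the paper's approach exactly: the paper treats the corollary as immediate, since the fact $\|v^\varepsilon-u_0\|_{L^2(\mathbb R^d)}\to 0$ was already invoked inside the proof of Lemma~\ref{l_3} (``due to representation (\ref{v_eps})''), and combining this with the conclusion of Lemma~\ref{l_3} via the triangle inequality is all that is needed. Your explicit justification of the uniform $L^2$ bound on the corrector terms via the cell-splitting argument of Lemma~\ref{fi_0} is precisely the reasoning the paper has in mind.
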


\begin{proof}[Proof of Theorem 1]
For any $f \in L^2(\mathbb R^d)$ there exists $f_\delta \in \cal{S}$ such that $\| f - f_\delta\|_{L^2(\mathbb R^d)} <\delta$.
Since the operator $(L^\varepsilon - m)^{-1}$ is bounded uniformly in $\varepsilon$, then
\begin{equation}\label{delta_1}
\| u^{\varepsilon}_\delta - u^\varepsilon \|_{L^2(\mathbb R^d)} \le C_1 \delta,
\end{equation}
and
\begin{equation}\label{delta_2}
\| u_{0,\delta} - u_0 \|_{L^2(\mathbb R^d)} \le C_1 \delta,
\end{equation}
where
$$
u^{\varepsilon} \ = \ (L^{\varepsilon} - m)^{-1} f, \; \; u_{0} \ = \ (\hat L - m)^{-1} f, \; \;
u^{\varepsilon}_\delta \ = \ (L^{\varepsilon} - m)^{-1} f_\delta, \; \; u_{0,\delta} \ = \ (\hat L - m)^{-1} f_\delta.
$$
Since $\| u^{\varepsilon}_\delta - u_{0, \delta} \|_{L^2(\mathbb R^d)} \to 0 $ by Corollary \ref{C1}, then (\ref{delta_1}) - (\ref{delta_2}) imply that
$$
\mathop{ \overline{\rm lim}}\limits_{\varepsilon \to 0} \| u^{\varepsilon} - u_0 \|_{L^2(\mathbb R^d)}  \le 2 C_1 \delta
$$
with an arbitrary small $\delta>0$. This implies that $\| u^{\varepsilon} - u_0 \|_{L^2(\mathbb R^d)} \to 0 $, as $\varepsilon \to 0$.
This completes the proof.

\end{proof}

\section{Convergence of semigroups in $L^2(\mathbb R^d)$}
\label{s_semigroup}


Since for any $\varepsilon>0$ the bounded operator defined in (\ref{L_eps}) is symmetric and negative in $L^2(\mathbb R^d, \nu_\eps)$, where $\nu (y) = \frac{\mu(y)}{\lambda(y)}$ and $\nu_\eps(x) = \nu(\frac{x}{\varepsilon})$, then by the Hille-Yosida theorem it is the generator of a strongly continuous contraction semigroup $T^\varepsilon(t)$ in $L^2(\mathbb R^d, \nu_\eps)$. Denote $T^0(t)$ a strongly continuous contraction semigroup in  $L^2(\mathbb R^d)$ generated by $L^0$.

\begin{proposition}\label{l_4}
For each  $f \in L^2(\mathbb R^d)$ there holds $T^\varepsilon(t)f \to T^0(t) f, \quad \ t \ge 0$. Moreover, this convergence is
uniform on bounded time intervals.

\end{proposition}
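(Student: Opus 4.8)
The plan is to deduce Proposition~\ref{l_4} from the resolvent convergence of Theorem~\ref{T1} by a Trotter--Kato type argument, the only real preparation being to put all the semigroups on one fixed Hilbert space. By \eqref{lm} we have $0<\alpha_1/\alpha_2\le\nu_\varepsilon(x)=\mu(x/\varepsilon)/\lambda(x/\varepsilon)\le\alpha_2/\alpha_1<\infty$ for all $x$ and all $\varepsilon>0$, so the norms of the spaces $L^2(\mathbb R^d,\nu_\varepsilon)$ are equivalent to the norm of $L^2(\mathbb R^d)$ with constants independent of $\varepsilon$. Hence the contraction semigroups $T^\varepsilon(t)$ may be viewed as operators on the single space $L^2(\mathbb R^d)$, and they satisfy $\sup_{\varepsilon>0,\,t\ge0}\|T^\varepsilon(t)\|_{\mathcal L(L^2(\mathbb R^d),L^2(\mathbb R^d))}\le C<\infty$; moreover convergence in $L^2(\mathbb R^d)$ is, up to the fixed factor $C$, the same as convergence in the $\nu_\varepsilon$-norms. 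It is therefore enough to show that $\|T^\varepsilon(t)f-T^0(t)f\|_{L^2(\mathbb R^d)}\to0$ uniformly for $t$ in bounded intervals, with $T^\varepsilon(t)$ and $T^0(t)$ regarded as uniformly bounded $C_0$-semigroups on $L^2(\mathbb R^d)$.

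Next I would record the ingredients of the Trotter--Kato scheme. The \emph{stability} bound is $\sup_{\varepsilon>0}\|(L^\varepsilon-m)^{-1}\|_{\mathcal L(L^2(\mathbb R^d),L^2(\mathbb R^d))}\le C_m<\infty$ for every $m>0$: this is precisely the uniform estimate already used in Lemma~\ref{l_3}, and it follows from the negativity of $L^\varepsilon$ in $L^2(\mathbb R^d,\nu_\varepsilon)$ together with the norm equivalence above. The \emph{consistency} input is Theorem~\ref{T1} itself: $(L^\varepsilon-m)^{-1}f\to(L^0-m)^{-1}f$ in $L^2(\mathbb R^d)$ for each $f$ and each $m>0$. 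Finally, $(L^0-m)^{-1}$ is the resolvent of the generator $L^0$ of the $C_0$-semigroup $T^0(t)$, and its range $H^2(\mathbb R^d)$ is dense in $L^2(\mathbb R^d)$, so the limit is non-degenerate and is correctly identified as $T^0(t)$.

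With these facts the Trotter--Kato approximation theorem (for uniformly bounded $C_0$-semigroups on a fixed Banach space) yields $T^\varepsilon(t)f\to T^0(t)f$ in $L^2(\mathbb R^d)$ for all $f$, uniformly for $t$ in compact subsets of $[0,\infty)$, which is the assertion. If one prefers an explicit argument, it suffices to work on the dense subspace $H^2(\mathbb R^d)=\{(L^0-m)^{-1}g:\ g\in L^2(\mathbb R^d)\}$ for a fixed $m>0$: for $f=(L^0-m)^{-1}g$ put $f^\varepsilon=(L^\varepsilon-m)^{-1}g$, so that $f^\varepsilon\to f$ by Theorem~\ref{T1} while $L^\varepsilon f^\varepsilon=g+mf^\varepsilon$ stays bounded in $L^2(\mathbb R^d)$ uniformly in $\varepsilon$. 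Then $t\mapsto T^\varepsilon(t)f^\varepsilon$ is uniformly bounded and uniformly Lipschitz, its Laplace transform $\int_0^\infty e^{-\lambda t}T^\varepsilon(t)f^\varepsilon\,dt=(\lambda-L^\varepsilon)^{-1}f^\varepsilon$ converges strongly to $(\lambda-L^0)^{-1}f=\int_0^\infty e^{-\lambda t}T^0(t)f\,dt$ for every $\lambda>0$ by Theorem~\ref{T1}, and an Arzel\`a--Ascoli compactness argument together with the uniqueness of the Laplace transform promotes this to $T^\varepsilon(t)f^\varepsilon\to T^0(t)f$ uniformly on bounded $t$-intervals; the uniform bound on $\|T^\varepsilon(t)\|$ then extends the convergence from $H^2(\mathbb R^d)$ to all of $L^2(\mathbb R^d)$, and back to the $\nu_\varepsilon$-norms.

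The step I expect to need the most care is the reduction to a fixed space: one has to make sure that the uniform two-sided bound on $\nu_\varepsilon$ genuinely makes $\{T^\varepsilon(t)\}$ a uniformly bounded family on $L^2(\mathbb R^d)$ and that strong convergence is unaffected by passing between $\|\cdot\|_{\nu_\varepsilon}$ and $\|\cdot\|_{L^2(\mathbb R^d)}$. Once this is in place, the rest is the classical passage from strong resolvent convergence to strong semigroup convergence, and the uniform resolvent estimate it relies on as a stability hypothesis has already been proved in the paper.
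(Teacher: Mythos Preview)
Your argument is correct and uses the same machinery as the paper (the Trotter--Kato/Ethier--Kurtz approximation theorem), but you enter that theorem through a different equivalent hypothesis. The paper verifies the \emph{generator approximation} condition: for $u$ in the core $\mathcal S(\mathbb R^d)$ it exhibits the corrector ansatz $v^\varepsilon(x)=u(x)+\varepsilon\varkappa_1(x/\varepsilon)\cdot\nabla u(x)+\varepsilon^2\varkappa_2(x/\varepsilon)\cdot\nabla\nabla u(x)$ and shows $v^\varepsilon\to u$ and $L^\varepsilon v^\varepsilon\to L^0 u$ in $L^2(\mathbb R^d,\nu_\varepsilon)$ (equivalently in $L^2(\mathbb R^d)$), quoting \eqref{mle}. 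You instead verify the \emph{resolvent convergence} condition, taking Theorem~\ref{T1} as a black box together with the uniform stability bound from Lemma~\ref{l_3} and the range density of $(L^0-m)^{-1}$. Both routes are valid; yours is slightly more economical because it reuses Theorem~\ref{T1} wholesale rather than re-invoking the corrector construction, while the paper's version makes the mechanism of convergence (the correctors) visible again at the semigroup level. The reduction to a single space via the uniform two-sided bound on $\nu_\varepsilon$ that you flag as the delicate point is handled identically in the paper.
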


\begin{proof} The space $ S(\mathbb R^d)$ is a core for the operator $L^0$.
By the approximation theorem \cite[Ch.1, Theorem 6.1]{EK} it is sufficient to show that for any $u \in S(\mathbb R^d)$  there exists $v^\eps \in L^2(\mathbb R^d, \nu_\eps)$ such that
\begin{equation}\label{conv1}
\| v^{\varepsilon} - u \|_{L^2(\mathbb R^d, \nu_\eps)} \to 0
\end{equation}
and
\begin{equation}\label{conv2}
\| L^{\varepsilon} v^{\varepsilon} - L^0 u \|_{L^2(\mathbb R^d, \nu_\eps)} \to 0
\end{equation}
as $\varepsilon \to 0$.

Notice that under our assumption (\ref{lm})
$$
0< \gamma_1 \le \nu_\eps(x) \le \gamma_2 < \infty.
$$
Therefore,
$$
\gamma_1 \| f \|^2_{L^2(\mathbb R^d)}  \le \| f \|^2_{L^2(\mathbb R^d, \nu_\eps)} \le \gamma_2 \| f \|^2_{L^2(\mathbb R^d)}.
$$
Thus the convergence (\ref{conv1})-(\ref{conv2}) is equivalent to
\begin{equation}\label{conv3}
\| v^{\varepsilon} - u \|_{L^2(\mathbb R^d)} \to 0, \quad
\| L^{\varepsilon} v^{\varepsilon} - L^0 u \|_{L^2(\mathbb R^d)} \to 0.
\end{equation}

For $v^\eps$ we take
$$
v^{\varepsilon}(x) \ = \ u(x)+ \varepsilon \varkappa_1 (\frac{x}{\varepsilon})\cdot \nabla u(x) + \varepsilon^2 \varkappa_2 (\frac{x}{\varepsilon})\cdot \nabla \nabla u(x),
$$
where $\varkappa_1$ and $\varkappa_2$ are the same as in (\ref{v_eps}).
Then the first convergence in (\ref{conv3}) follows by the same arguments as those in the proof of Lemma \ref{fi_0}, and the second one is a consequence of (\ref{mle}). Now the desired statements follow from Theorem 6.1 (Chapter 1) in \cite{EK}.

\end{proof}

\begin{corollary}
The convergence of semigroups implies the convergence of solutions of the corresponding evolution equations.
\end{corollary}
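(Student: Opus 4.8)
The statement is a direct consequence of Proposition \ref{l_4} together with the well-posedness of the Cauchy problems involved. The plan is as follows. First, recall that by the Hille--Yosida theorem, already invoked in Section \ref{s_semigroup}, the bounded operator $L^\eps$ generates the strongly continuous contraction semigroup $T^\eps(t)$ on $L^2(\mathbb R^d,\nu_\eps)$; since $L^\eps$ is bounded, for every $f\in L^2(\mathbb R^d)$ the function $t\mapsto u^\eps(t):=T^\eps(t)f$ is the unique classical solution of
\begin{equation}\label{cauchy-eps}
\partial_t u^\eps(t,\cdot)=L^\eps u^\eps(t,\cdot),\qquad u^\eps(0,\cdot)=f,
\end{equation}
and analogously $t\mapsto u^0(t):=T^0(t)f$ solves the homogenized problem $\partial_t u^0=L^0u^0$, $u^0(0)=f$. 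Then Proposition \ref{l_4} yields immediately
\begin{equation}\label{ev-conv}
\sup_{0\le t\le\mathcal T}\|u^\eps(t)-u^0(t)\|_{L^2(\mathbb R^d)}\to0,\qquad\eps\to0,
\end{equation}
for every $\mathcal T>0$ and every $f\in L^2(\mathbb R^d)$, where one uses the norm equivalence $\gamma_1\|\cdot\|_{L^2(\mathbb R^d)}^2\le\|\cdot\|_{L^2(\mathbb R^d,\nu_\eps)}^2\le\gamma_2\|\cdot\|_{L^2(\mathbb R^d)}^2$, following from \eqref{lm}, to transfer the convergence from the weighted spaces to the fixed space $L^2(\mathbb R^d)$.

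Second, I would record the slightly more general version that is actually used in practice: if the initial data are allowed to depend on $\eps$, with $f^\eps\to f$ in $L^2(\mathbb R^d)$, then \eqref{ev-conv} still holds for $u^\eps(t)=T^\eps(t)f^\eps$. This follows from the uniform bound $\|T^\eps(t)\|_{\mathcal L(L^2(\mathbb R^d),L^2(\mathbb R^d))}\le\sqrt{\gamma_2/\gamma_1}$, a consequence of the contraction property in the weighted norm and the norm equivalence, combined with the splitting
\begin{equation*}
\|T^\eps(t)f^\eps-T^0(t)f\|_{L^2(\mathbb R^d)}\le\sqrt{\gamma_2/\gamma_1}\,\|f^\eps-f\|_{L^2(\mathbb R^d)}+\|T^\eps(t)f-T^0(t)f\|_{L^2(\mathbb R^d)},
\end{equation*}
in which the first term is controlled uniformly in $t$ and the second tends to $0$ by the previous step. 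For an inhomogeneous right-hand side $g\in C([0,\mathcal T];L^2(\mathbb R^d))$ one represents the solution by Duhamel's formula $u^\eps(t)=T^\eps(t)f+\int_0^t T^\eps(t-s)g(s)\,ds$ and passes to the limit under the integral using the same uniform bound and dominated convergence.

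There is no genuine obstacle here: the corollary is a formal consequence of the semigroup convergence established in Proposition \ref{l_4}. The only point requiring a word of care is that the approximating semigroups are contractions in the $\eps$-dependent inner product of $L^2(\mathbb R^d,\nu_\eps)$ rather than in that of $L^2(\mathbb R^d)$; this is harmless, since by \eqref{lm} all these norms are uniformly equivalent to the $L^2(\mathbb R^d)$-norm, so every convergence and every uniform bound above can be stated directly in $L^2(\mathbb R^d)$.
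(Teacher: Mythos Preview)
Your argument is correct and in fact considerably more detailed than what the paper provides: the paper states the corollary without proof, treating it as an immediate consequence of Proposition~\ref{l_4}. Your additional remarks on $\eps$-dependent initial data and on the inhomogeneous problem via Duhamel's formula are correct and go beyond what the paper records, but they are natural complements and require no further justification.
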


\section{Markov semigroup in $C_0 (\mathbb R^d)$}
\label{s_markov}

We consider the Markov semigroup $T(t)$ generated by the operator $L$ given by (\ref{L_u}) in $C_0 (\mathbb R^d)$,
where $C_0 (\mathbb R^d)$ stands for the Banach space of continuous functions vanishing at infinity with the norm $\|f \| = \sup |f(x)|$.
Here we impose on the functions $a(x)$, $\lambda(x)$ and $\mu(x)$ slightly more
restrictive conditions than those of Section \ref{s_pbmset}. Namely, we suppose that 
\begin{equation}\label{MS1}
a(x) \in C (\mathbb R^d) , \; a(x) = a(-x), \; a(x) \ge 0, \quad a(x) \le \frac{C}{1+|x|^{d+\delta}}, \quad\hbox{with } \delta >2,
\end{equation}
and the functions $\lambda(x), \mu(x) \in C(\mathbb R^d)$ are continuous, periodic and satisfy the same bounds as above:
$$
0< \alpha_1 \le \lambda(x), \  \mu(x) \le \alpha_2 < \infty.
$$
Then the operator $L$ is bounded in $C_0 (\mathbb R^d)$, and $T(t): \ C_0 (\mathbb R^d) \to C_0 (\mathbb R^d)$.

\begin{lemma}\label{C_1}
The semigroup $T(t)$ generated by the operator (\ref{L_u}) is the Feller semigroup, i.e. it is a strongly continuous, positivity preserving, contraction and conservative semigroup in $C_0(\mathbb R^d)$.\\
For each probability measure $\nu$ in $\mathbb R^d$
there exists a jump Markov process $X$ corresponding to the semigroup $T(t)$ with the initial distribution $\nu$ and with a c\`{a}dl\`{a}g modification, i.e. with sample paths in $D_{\mathbb R^d}[0, \infty)$ (right-continuous functions with finite left-hand limits).
\end{lemma}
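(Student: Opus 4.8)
The plan is to reduce everything to soft properties of the bounded integral operator $L$. The pointwise bound in \eqref{MS1} with $\delta>2$ gives at once $a\in L^1(\mathbb R^d)\cap L^\infty(\mathbb R^d)$ and $\int_{\mathbb R^d}|x|^2a(x)\,dx<\infty$ (the latter because $|x|^2(1+|x|^{d+\delta})^{-1}$ is integrable exactly when $\delta>2$), so the standing assumptions of Section \ref{s_pbmset} hold. Writing $L=L^- - M_\Lambda$, where $L^-$ is the operator from Section \ref{s_pbmset} and $M_\Lambda$ denotes multiplication by $\Lambda(x):=\lambda(x)\int_{\mathbb R^d}a(x-y)\mu(y)\,dy$, one has $0<\alpha_1^2a_1\le\Lambda(x)\le\alpha_2^2a_1<\infty$ and $\Lambda\in C_b(\mathbb R^d)$. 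I would then check that $L^-$ maps $C_0(\mathbb R^d)$ into itself: continuity of $L^-u$ follows from dominated convergence with the integrable majorant $\alpha_2\|u\|_\infty\sup_{|\xi-x_0|\le1}a(\xi-\cdot)$, and decay of $L^-u$ at infinity from splitting $u=u\mathbf 1_K+u\mathbf 1_{K^c}$ for large compact $K$ and using $\int_{K}a(x-y)\,dy\to0$ as $|x|\to\infty$ (since $a\in L^1$). Hence $L$ is a bounded operator on $C_0(\mathbb R^d)$ — and, by the same estimates, on $C_b(\mathbb R^d)$, leaving $C_0(\mathbb R^d)$ invariant — and $L\mathbf 1=0$.

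Since $L$ is bounded, $T(t):=e^{tL}$ is a uniformly, hence strongly, continuous semigroup on $C_0(\mathbb R^d)$. For positivity I would use the decomposition $L=B-cI$ with $c:=\|\Lambda\|_\infty$ and $B:=L^-+M_{c-\Lambda}$: because $a,\lambda,\mu\ge0$ and $c-\Lambda\ge0$, the operator $B$ is positivity preserving, whence $T(t)=e^{-ct}e^{tB}=e^{-ct}\sum_{n\ge0}t^nB^n/n!\ge0$. (Equivalently, $L$ satisfies the positive maximum principle — if $u\in C_0(\mathbb R^d)$ attains a nonnegative maximum at $x_0$ then $(Lu)(x_0)=\lambda(x_0)\int a(x_0-y)\mu(y)(u(y)-u(x_0))\,dy\le0$ — and one may invoke the Hille--Yosida--Ray theorem for the bounded generator $L$.) Conservativeness follows since the Feller extension of $T(t)$ to $C_b(\mathbb R^d)$ coincides with $e^{tL}|_{C_b(\mathbb R^d)}$, so $L\mathbf 1=0$ forces $T(t)\mathbf 1=\mathbf 1$. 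Contraction then follows from positivity and conservativeness, since $\|f\|_\infty\mathbf 1\pm f\ge0$ implies $\|f\|_\infty\mathbf 1\pm T(t)f\ge0$, i.e. $\|T(t)f\|_\infty\le\|f\|_\infty$. This shows $T(t)$ is a conservative Feller semigroup on $C_0(\mathbb R^d)$.

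To obtain the process, I would represent $T(t)f(x)=\int_{\mathbb R^d}f(y)\,p(t,x,dy)$ via the Riesz theorem; positivity, contraction and conservativeness make $p(t,x,\cdot)$ a Borel probability measure, and the semigroup identity gives Chapman--Kolmogorov. The standard theory of Feller semigroups on the locally compact space $\mathbb R^d$ (see \cite[Ch.~4]{EK}) then provides, for every initial law $\nu$, a Markov process $X$ with transition function $p(t,x,dy)$ admitting a c\`{a}dl\`{a}g modification with paths in $D_{\mathbb R^d}[0,\infty)$. Since $L$ is the pure integral operator $(Lu)(x)=\int_{\mathbb R^d}(u(y)-u(x))\,\Pi(x,dy)$ with the \emph{finite} jump kernel $\Pi(x,dy)=\lambda(x)a(x-y)\mu(y)\,dy$, $\Pi(x,\mathbb R^d)=\Lambda(x)\le\alpha_2^2a_1$, the process $X$ is a jump Markov process: it is the minimal jump process with exponential holding times of rate $\Lambda(X_t)$ and jump law $\Pi(X_t,\cdot)/\Lambda(X_t)$, and the uniform bound on $\Lambda$ rules out explosion, so its trajectories are piecewise constant and c\`{a}dl\`{a}g.

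The only points requiring genuine care will be the verification that $L^-$ preserves $C_0(\mathbb R^d)$ — this is exactly where the integrability and decay of $a$ in \eqref{MS1} enter — and the identification of the transition kernels as probability (not merely sub-probability) measures, handled through the $C_b$-extension together with $L\mathbf 1=0$; the rest is a direct application of the Hille--Yosida--Ray theorem and of the regularization theorem for Feller processes. Note that no compactness of resolvents is needed here, in contrast with the study of $L^\varepsilon$, precisely because the fixed operator $L$ is bounded on $C_0(\mathbb R^d)$.
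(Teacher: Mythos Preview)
Your argument is correct and follows essentially the same route as the paper: boundedness of $L$ on $C_0(\mathbb R^d)$ together with the positive maximum principle, then the Hille--Yosida(--Ray) theorem, and finally the general Feller theory from \cite{EK} for the existence of a c\`adl\`ag modification. The paper's proof is a compressed version of yours; where you verify in detail that $L^-$ preserves $C_0(\mathbb R^d)$, derive positivity from the series $e^{-ct}\sum t^nB^n/n!$, and deduce contraction from positivity plus conservativeness, the paper simply asserts boundedness and the positive maximum principle and invokes Hille--Yosida, then displays the representation $(Lf)(x)=\tilde\lambda(x)\int(f(y)-f(x))p(x,y)\,dy$ with $\int p(x,y)\,dy=1$ (your $\Pi(x,\cdot)/\Lambda(x)$) to read off conservativeness and the jump-process structure directly.
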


\begin{proof}
Since $L$ is bounded and  satisfies the positive maximum principle, the statement of Lemma follows from the Hille-Yosida theorem. In addition we can rewrite $L$ as follows:
\begin{equation}\label{MS2}
(L f)(x) \ = \ \tilde\lambda(x) \int\limits_{\mathbb R^d} (f(y) - f(x)) p(x,y) \ dy, \quad \int_{\mathbb R^d} p(x,y) \ dy = 1 \; \forall x
\end{equation}
with
$$
\tilde \lambda(x) = \lambda(x) q(x), \quad q(x) = \int_{\mathbb R^d} a(x-y) \mu(y) dy >0, \quad p(x,y) = \frac{a(x-y) \mu(y)}{q(x)}.
$$
This representation implies that $L$ is a generator of jump Markov process with $T(t)1=1$.

\noindent
The proof of the second statement follows from general results concerning Feller semigroups, see e.g. \cite{BSW, EK}.
\end{proof}


Let us consider the family of semigroups $T^\varepsilon(t)$ generated by the operators $L^\varepsilon$ defined in (\ref{L_eps}) and the famity of corresponding Markov processes $X_\varepsilon$. We denote by $T^0(t)$ the semigroup in $C_0 (\mathbb R^d)$ generated by the operator $L^0$ given by (\ref{L_hat}). First we prove the result about convergence of the semigroups.

\begin{proposition}\label{C_3}
For each  $f \in C_0 (\mathbb R^d)$ there holds
\begin{equation}\label{MST}
 \lim\limits_{\varepsilon \to 0}  T^\varepsilon(t)f = T^0 (t) f, \quad  t \ge 0.
\end{equation}
Moreover, this convergence is uniform on bounded time intervals.
\end{proposition}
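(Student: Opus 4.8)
The plan is to mimic the $L^2$ argument of Section \ref{s_semigroup}, but now working in the space $C_0(\mathbb R^d)$ and invoking the Trotter--Kato--Kurtz approximation theorem for Feller semigroups (see \cite[Ch.~1, Theorem~6.1]{EK}), exactly as in the proof of Proposition \ref{l_4}. Since $\mathcal S(\mathbb R^d)$ is a core for $L^0$ acting in $C_0(\mathbb R^d)$, it suffices to exhibit, for every $u\in\mathcal S(\mathbb R^d)$, a family $v^\varepsilon\in C_0(\mathbb R^d)$ with $\|v^\varepsilon-u\|_{C_0}\to 0$ and $\|L^\varepsilon v^\varepsilon - L^0 u\|_{C_0}\to 0$ as $\varepsilon\to 0$. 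The natural candidate is again the two-scale ansatz
$$
v^\varepsilon(x)=u(x)+\varepsilon\,\varkappa_1\Big(\tfrac{x}{\varepsilon}\Big)\cdot\nabla u(x)+\varepsilon^2\,\varkappa_2\Big(\tfrac{x}{\varepsilon}\Big)\cdot\nabla\nabla u(x),
$$
with the same correctors $\varkappa_1,\varkappa_2$ constructed in Section \ref{s_corr}. The identity \eqref{mle} together with the decomposition \eqref{K1_1}--\eqref{K2_1} of $L^\varepsilon v^\varepsilon$ is purely algebraic and remains valid pointwise; what changes is the norm in which the remainder $\phi_\varepsilon$ must be controlled.

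First I would record that under the strengthened hypothesis \eqref{MS1} the correctors $\varkappa_1,\varkappa_2$ are in fact bounded (continuous periodic) functions, not merely $L^2(\mathbb T^d)$: the operator $A=G+K$ of \eqref{Akappa} maps $C(\mathbb T^d)$ to $C(\mathbb T^d)$, $G$ is multiplication by a function bounded below, $K$ is compact on $C(\mathbb T^d)$ (its kernel $\hat a$ is now continuous and bounded thanks to the decay $a(x)\le C(1+|x|^{d+\delta})^{-1}$), and the right-hand sides of \eqref{kappa_1} and \eqref{kappa_2} are continuous; hence the Fredholm alternative in $C(\mathbb T^d)$ gives continuous solutions. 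Consequently $\|v^\varepsilon-u\|_{C_0}\le\varepsilon\|\varkappa_1\|_\infty\|\nabla u\|_\infty+\varepsilon^2\|\varkappa_2\|_\infty\|\nabla\nabla u\|_\infty\to 0$, which is the first convergence. For the second, I would estimate $\phi_\varepsilon$ (the expression \eqref{14}) in the sup-norm by repeating the splitting of Lemma \ref{fi_0}: the order-$\varepsilon^0$ term is bounded by $\alpha_2^2\sup_{|z|\le R}\|\nabla\nabla u(\cdot-\varepsilon z t)-\nabla\nabla u(\cdot)\|_\infty\int|z|^2a(z)\,dz$ plus $2\alpha_2^2\|\nabla\nabla u\|_\infty\int_{|z|>R}|z|^2a(z)\,dz$, and choosing $R=\varepsilon^{-1/2}$ makes both vanish by uniform continuity of $\nabla\nabla u$ and integrability of $|z|^2a(z)$; the remaining terms carry an explicit factor $\varepsilon$ and are bounded by $\varepsilon\,\alpha_2^2\,(\|\varkappa_1\|_\infty+\|\varkappa_2\|_\infty)\,(\|\nabla^3 u\|_\infty)\int(|z|^2+|z|)a(z)\,dz\to 0$. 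This gives $\|L^\varepsilon v^\varepsilon-L^0 u\|_{C_0}\to 0$. Finally, applying \cite[Ch.~1, Theorem~6.1]{EK} yields $T^\varepsilon(t)f\to T^0(t)f$ for every $f\in C_0(\mathbb R^d)$, uniformly for $t$ in bounded intervals, since the approximation theorem delivers precisely this mode of convergence.

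The main obstacle is the sup-norm bound on $\phi_\varepsilon$: in the $L^2$ proof one exploits the translation-invariance of the $L^2(\mathbb R^d)$ norm together with periodicity of the correctors to turn $\|\varkappa_1(\cdot/\varepsilon)\,\nabla^3 u(\cdot+q)\|_{L^2}$ into something uniformly bounded via a Riemann-sum argument, and one uses $\varkappa_1,\varkappa_2\in L^2(\mathbb T^d)$. In $C_0$ the corresponding factor is simply $\|\varkappa_i\|_\infty\|\nabla^3 u\|_\infty$, so the estimate is actually cleaner once boundedness of the correctors is in hand — hence the real work is the preliminary regularity upgrade for $\varkappa_1,\varkappa_2$, i.e. verifying that the Fredholm theory of Section \ref{s_corr} goes through in $C(\mathbb T^d)$ under \eqref{MS1}, in particular that $K$ is compact on $C(\mathbb T^d)$ and that the solvability condition \eqref{FA1} (already checked) together with Lemma \ref{l_2} still pins down the same $\Theta$. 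One should also note that $L^\varepsilon v^\varepsilon\in C_0(\mathbb R^d)$ and $v^\varepsilon\in C_0(\mathbb R^d)$, which is immediate since $u\in\mathcal S(\mathbb R^d)$ and the correctors are bounded, so the ansatz indeed lies in the domain of $L^\varepsilon$ acting on $C_0$.
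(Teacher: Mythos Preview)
Your proposal is correct and follows essentially the same route as the paper: the same two-scale ansatz $v^\varepsilon$, the same algebraic decomposition \eqref{K1_1}--\eqref{14}, a sup-norm version of the remainder estimate of Lemma~\ref{fi_0}, and the same appeal to \cite[Ch.~1, Theorem~6.1]{EK}. The only point where your argument and the paper's diverge is the regularity upgrade $\varkappa_1,\varkappa_2\in C(\mathbb T^d)$: you invoke the Fredholm alternative for $A=K-G$ directly in $C(\mathbb T^d)$, while the paper instead rewrites the corrector equation as $(P-E)\varkappa_1=g$ with $P$ a Markov operator on $C(\mathbb T^d)$, applies the Krein--Rutman theorem to isolate the simple eigenvalue $1$ with eigenfunction $\mathbf 1$, and inverts $E-P$ on the complementary invariant subspace by a Neumann series. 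Your version is a little more economical, though one should note that the solvability condition in $C(\mathbb T^d)$ lives in the dual (measures); the cleanest way to close your argument is the bootstrap you implicitly suggest: since $\varkappa_1\in L^2(\mathbb T^d)$ already exists and $\hat a\in C(\mathbb T^d)$ under \eqref{MS1}, the identity $\varkappa_1=G^{-1}(K\varkappa_1-f)$ forces $\varkappa_1\in C(\mathbb T^d)$. The paper's Krein--Rutman route has the advantage of giving an explicit inverse and making the role of the probabilistic structure transparent.
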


\begin{proof}
We follow the same reasoning as in the proof of Proposition \ref{l_4}, and show the convergence (\ref{conv1})-(\ref{conv2}) in the norm of the Banach space $ C_0 (\mathbb R^d)$. We again take $S(\mathbb R^d)$ as a core for $L^0$ in  $ C_0 (\mathbb R^d)$, and for any $u \in  S (\mathbb R^d)$ consider the approximation sequence $v^\varepsilon $ given by (\ref{v_eps}). First we have to  prove that $v^\varepsilon  \in  C_0 (\mathbb R^d)$. To this end it suffices to show that $\varkappa_1, \ \varkappa_2 \in  C (\mathbb T^d)$, where $\varkappa_1, \ \varkappa_2$ are solutions of equations (\ref{kappa_1}), (\ref{kappa_2}), respectively.  We remind that the equation on $\varkappa_1$ reads
\begin{equation}\label{MSkappa_1}
\int\limits_{\mathbb T^d}  \hat a (\xi-\eta) \mu (\eta)  (\varkappa_1 (\eta) - \varkappa_1 (\xi) ) \ d\eta \ = \ f(\xi),
\end{equation}
where the function $\hat a \in C(\mathbb T^d)$ was defined by (\ref{hata}), and
\begin{equation}\label{MSf}
f(\xi) \ = \ \int\limits_{\mathbb T^d}  \hat b (\xi-\eta) \mu (\eta )  \ d\eta \ \in \ (C(\mathbb T^d))^d, \quad \hat b (\eta) = \sum_{k \in Z^d} a(\eta+k) \ k.
\end{equation}
As was shown above, see (\ref{FA1}), $\int\limits_{\mathbb T^d} f(\xi) \mu(\xi) \ d \xi \  = \ 0$, consequently
the solvability condition holds, and there exists a solution  $\varkappa_1 \in  (L^2 (\mathbb T^d))^d$ of equation (\ref{MSkappa_1}). We will show now that  $\varkappa_1^{i} \in  C (\mathbb T^d)$ for all $i = 1, \ldots, d$.

Let us rewrite equation (\ref{MSkappa_1}) for each $i$ as follows
\begin{equation}\label{MSP}
(P-E)\varkappa_1^{i} \ = \ g^{i}, \quad g^{i}(\xi) = \frac{f^{i}(\xi)}{q(\xi)}, \quad q (\xi) =  \int\limits_{\mathbb T^d} \hat a (\xi-\eta) \mu (\eta ) \ d\eta >0
\end{equation}
with
\begin{equation}\label{Akappa_torus}
(P \varphi) (\xi) \ = \ \int\limits_{\mathbb T^d}  p (\xi, \eta) \varphi (\eta) \ d\eta, \quad p(\xi, \eta) = \frac{\hat a(\xi - \eta) \mu(\eta)}{q(\xi)}, \quad  \int\limits_{\mathbb T^d}  p (\xi, \eta) \ d\eta = 1 \;\; \forall \; \xi.
\end{equation}
Then $P$ is a compact operator with positivity improving property in $C(\mathbb T^d)$, and by the Krein-Rutman theorem there exists the maximal eigenvalue $\lambda_0=1$ corresponding to the eigenfunction $\varphi_0(\eta) \equiv 1$, and other eigenvalues of $P$ are less than 1 by  the absolute value. Consequently, $C(\mathbb T^d)  = \{ 1 \} \oplus {\cal H}_1$ with
$$
{\cal H}_1 =\Big\{ \psi \in C(\mathbb T^d): \  \int\limits_{\mathbb T^d} \mu (\eta ) q(\eta) \psi(\eta) \ d\eta =0 \Big\}.
$$
One can easily check that ${\cal H}_1$ is an invariant subspace for $P$.
Using Neumann decomposition for the operator $P_1 - E$ with $P_1 = P|_{{\cal H}_1}$ we can see that the operator $P-E$ is an invertable operator on ${\cal H}_1$ mapping ${\cal H}_1$ on itself. Thus,
$$
\varkappa_1 \  = \ -(E- P_1)^{-1} g \ \in C(\mathbb T^d).
$$

Similarly, we get $\varkappa_2^{ij} \in C(\mathbb T^d), \; \forall \ i,j =1, \ldots, d,$ for the solutions of equation (\ref{kappa_2}).
Thus  $v^\varepsilon  \in  C_0 (\mathbb R^d)$, and (\ref{v_eps}) implies that
$$
\| v^\varepsilon - u \|_{C_0 (\mathbb R^d)} \ \to \ 0 \quad \mbox{ as } \; \varepsilon \to 0.
$$
The convergence
$$
\| L^\varepsilon v^\varepsilon - L^0 u \|_{C_0 (\mathbb R^d)} \ = \ \| \phi_\varepsilon \|_{C_0 (\mathbb R^d)} \to \ 0 \quad \mbox{ as } \; \varepsilon \to 0
$$
follows from the same reasoning as in the proof of Lemma \ref{ml}. Thus we can apply the approximation theorem from \cite{EK} in the same way as in Proposition \ref{l_4}, and obtain convergence (\ref{MST}).
\end{proof}

Applying the same arguments as in the proof of the last statement one can show that the homogenization result
of Theorem \ref{T1} also holds in the space of continuous functions.

\begin{proposition}\label{also_in_c}
Under the assumptions of this section, for any $f\in C_0(\mathbb R^d)$  we have
$$
 \| (L^{\varepsilon} - m)^{-1} f - (L^0 - m)^{-1} f \|_{C_0(\mathbb R^d)} \ \to 0, \quad \mbox{  as } \; \varepsilon \to 0.
$$
\end{proposition}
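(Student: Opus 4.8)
The plan is to carry over to the Banach space $C_0(\mathbb R^d)$ the argument that proved Theorem \ref{T1}, namely the scheme of Lemma \ref{l_3}, Corollary \ref{C1} and the density argument in the proof of Theorem \ref{T1}. Two facts must be transported to the continuous setting: (i) the corrected function $v^\varepsilon$ of (\ref{v_eps}) belongs to $C_0(\mathbb R^d)$ and approximates $u_0$ in the sup-norm; (ii) $L^\varepsilon v^\varepsilon = L^0 u_0 + \phi_\varepsilon$ with $\|\phi_\varepsilon\|_{C_0(\mathbb R^d)}\to0$. Point (i) is already at hand: in the proof of Proposition \ref{C_3} it was shown that, under the present hypotheses (\ref{MS1}), the correctors $\varkappa_1,\varkappa_2$ solving (\ref{kappa_1}), (\ref{kappa_2}) lie in $C(\mathbb T^d)$; hence for $f\in{\cal S}(\mathbb R^d)$, $u_0=(L^0-m)^{-1}f\in{\cal S}(\mathbb R^d)$, the function $v^\varepsilon$ is continuous, vanishes at infinity, and $\|v^\varepsilon-u_0\|_{C_0(\mathbb R^d)}\to0$ by its very definition (\ref{v_eps}).

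For (ii) I would establish the $C_0$-analogue of Lemma \ref{fi_0}. The algebraic identity (\ref{mle})--(\ref{14}) is norm-free, so it remains to bound the three pieces $\phi_\varepsilon^{(1)},\phi_\varepsilon^{(2)},\phi_\varepsilon^{(3)}$ of (\ref{14}) in the sup-norm rather than in $L^2$. Each estimate in the proof of Lemma \ref{fi_0} has the schematic form "an $\varepsilon$-independent constant ($\int|z|^2a(z)\,dz$ or $\int|z|a(z)\,dz$, both finite by (\ref{MS1}) since $\delta>2$) times a modulus of continuity, or a uniform sup-bound, of derivatives of $u_0$ of order $\le 3$"; since $u_0\in{\cal S}(\mathbb R^d)$ these tend to $0$ with $\varepsilon$, and the periodicity of $\varkappa_1,\varkappa_2\in C(\mathbb T^d)$ together with the Riemann-sum argument used in Lemma \ref{fi_0} gives $\sup_{q}\|\varkappa_i(\cdot/\varepsilon)\,\nabla\nabla\nabla u_0(\cdot+q)\|_{C_0(\mathbb R^d)}$ bounded uniformly in $\varepsilon$. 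Thus $\|\phi_\varepsilon\|_{C_0(\mathbb R^d)}\to0$. Then, exactly as in Lemma \ref{l_3}, setting $\tilde\phi_\varepsilon=\phi_\varepsilon+m(v^\varepsilon-u_0)$ gives $(L^\varepsilon-m)v^\varepsilon=f+\tilde\phi_\varepsilon$ with $\|\tilde\phi_\varepsilon\|_{C_0(\mathbb R^d)}\to0$; since $T^\varepsilon(t)$ is a contraction semigroup on $C_0(\mathbb R^d)$ (cf. Lemma \ref{C_1} applied to $L^\varepsilon$) one has $\|(L^\varepsilon-m)^{-1}\|_{\mathcal L(C_0(\mathbb R^d),C_0(\mathbb R^d))}\le 1/m$, whence
$$
\|(L^\varepsilon-m)^{-1}f-u_0\|_{C_0(\mathbb R^d)}\ \le\ \|v^\varepsilon-u_0\|_{C_0(\mathbb R^d)}+\tfrac1m\|\tilde\phi_\varepsilon\|_{C_0(\mathbb R^d)}\ \to\ 0
$$
for every $f\in{\cal S}(\mathbb R^d)$. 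Finally one passes to arbitrary $f\in C_0(\mathbb R^d)$ by density, using the same $\varepsilon$-uniform bound $1/m$ for both $(L^\varepsilon-m)^{-1}$ and $(L^0-m)^{-1}$, precisely as in the proof of Theorem \ref{T1}.

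The only genuine work is the $C_0$-version of Lemma \ref{fi_0}, and it is routine; the place where the stronger decay assumption (\ref{MS1}) is really used is in the finiteness of $\int|z|a(z)\,dz$ and $\int|z|^2a(z)\,dz$ and in the continuity of $\hat a$ (needed already for $\varkappa_1,\varkappa_2\in C(\mathbb T^d)$). A shorter alternative, which avoids redoing any of these estimates, is to represent the resolvents as Laplace transforms of the semigroups, $(L^\varepsilon-m)^{-1}f=-\int_0^\infty e^{-mt}T^\varepsilon(t)f\,dt$ and likewise for $L^0$, and then split $\int_0^\infty=\int_0^T+\int_T^\infty$: the tail is bounded by $\tfrac{2}{m}e^{-mT}\|f\|_{C_0(\mathbb R^d)}$ using contractivity of both semigroups on $C_0(\mathbb R^d)$, and on $[0,T]$ one invokes the convergence $T^\varepsilon(t)f\to T^0(t)f$, uniform for $t\in[0,T]$, supplied by Proposition \ref{C_3}; letting first $\varepsilon\to0$ and then $T\to\infty$ gives the claim. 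In either form, Proposition \ref{also_in_c} is essentially a corollary of Proposition \ref{C_3} and the uniform contractivity of the semigroups.
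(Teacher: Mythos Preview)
Your proposal is correct and aligns with the paper's approach: the paper gives no detailed proof of Proposition~\ref{also_in_c}, merely stating that ``applying the same arguments as in the proof of the last statement'' (i.e., Proposition~\ref{C_3}) yields the result, which is precisely your first route---carry over the corrector construction and the $\phi_\varepsilon$ estimates to the sup-norm, using the continuity of $\varkappa_1,\varkappa_2$ already established there. Your Laplace-transform alternative is a clean shortcut the paper does not mention; it is legitimate since both $L^\varepsilon$ and $L^0$ generate contraction semigroups on $C_0(\mathbb R^d)$, and it has the advantage of deriving resolvent convergence directly from the semigroup convergence of Proposition~\ref{C_3} without revisiting any of the corrector estimates.
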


We proceed with the main result of this section that states the invariance principle for the family of processes $X_\eps$.

\begin{theorem}\label{C_4} (Invariance principle).
Let $X_\varepsilon$ be a Markov process corresponding to the semigroup $T^\varepsilon(t)$ with an initial distribution $\nu$, and $X_0$ be a Markov process corresponding to the semigroup $T^0 (t)$  with the same initial distribution. Then the Markov processes $X_\varepsilon$ and $X_0$ have sample paths in $D_{\mathbb R^d}[0, \infty)$, and $X_\varepsilon \ \Rightarrow \ X_0$ in $D_{\mathbb R^d}[0, \infty)$.
\end{theorem}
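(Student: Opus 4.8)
The plan is to promote the semigroup convergence of Proposition~\ref{C_3} to weak convergence of the processes in the Skorokhod space, along the classical lines of Ethier and Kurtz \cite{EK}. First I would record the structural facts. By Lemma~\ref{C_1}, applied to $L$ and, verbatim, to each $L^\varepsilon$ (an operator of the same form up to the scaling factor, bounded for fixed $\varepsilon$ and satisfying the positive maximum principle under the assumptions of this section), the processes $X_\varepsilon$ are Feller and have sample paths in $D_{\mathbb R^d}[0,\infty)$. Moreover $L^0=\Theta^{ij}\partial^2_{x^ix^j}$ generates on $C_0(\mathbb R^d)$ the Gaussian semigroup associated with the constant positive definite matrix $\Theta$, so $X_0$ is a linearly transformed Brownian motion; in particular it is a Feller process with \emph{continuous} paths, hence with paths in $D_{\mathbb R^d}[0,\infty)$.

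By the standard sub-subsequence argument (see \cite[Ch.~3]{EK}) the convergence $X_\varepsilon\Rightarrow X_0$ in $D_{\mathbb R^d}[0,\infty)$ follows from two facts: convergence of the finite-dimensional distributions, and relative compactness of $\{X_\varepsilon\}_{\varepsilon>0}$ in $D_{\mathbb R^d}[0,\infty)$. The first is immediate from Proposition~\ref{C_3}: all the processes start from the same law $\nu$, the joint law of $(X_\varepsilon(t_1),\dots,X_\varepsilon(t_k))$, $0\le t_1<\dots<t_k$, is obtained by successively applying the transition operators $T^\varepsilon(t_{j+1}-t_j)$ to functions in $C_0(\mathbb R^d)$ (which form an algebra), and $T^\varepsilon(t)g\to T^0(t)g$ in $C_0(\mathbb R^d)$ for every $g$ and $t$; an induction on $k$ gives convergence of these joint laws to the corresponding ones for $X_0$, and since $X_0$ has continuous paths its finite-dimensional distributions determine its law on $D_{\mathbb R^d}[0,\infty)$. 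So only the relative compactness of $\{X_\varepsilon\}$ remains.

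The hard part will be this tightness, because $\|L^\varepsilon\|_{\mathcal L(C_0,C_0)}\to\infty$ and therefore the naive martingales $u(X_\varepsilon(t))-u(X_\varepsilon(0))-\int_0^t L^\varepsilon u(X_\varepsilon(s))\,ds$ carry no uniform-in-$\varepsilon$ control of their compensators. Here I would use the correctors of Section~\ref{s_corr}: for $u\in S(\mathbb R^d)$ put $v^\varepsilon=u+\varepsilon\varkappa_1(\cdot/\varepsilon)\cdot\nabla u+\varepsilon^2\varkappa_2(\cdot/\varepsilon)\cdot\nabla\nabla u$ as in (\ref{v_eps}). By the estimates of Section~\ref{s_corr} together with the proof of Proposition~\ref{C_3}, the norms $\|v^\varepsilon\|_{C_0(\mathbb R^d)}$ and $\|L^\varepsilon v^\varepsilon\|_{C_0(\mathbb R^d)}$ are bounded uniformly in $\varepsilon$, with $v^\varepsilon\to u$ and $L^\varepsilon v^\varepsilon\to L^0u$ uniformly. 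Hence $M^\varepsilon_u(t):=v^\varepsilon(X_\varepsilon(t))-v^\varepsilon(X_\varepsilon(0))-\int_0^t L^\varepsilon v^\varepsilon(X_\varepsilon(s))\,ds$ is a martingale with uniformly bounded ingredients, and Aldous's criterion (\cite[Ch.~3]{EK}) yields tightness of the real-valued processes $\{v^\varepsilon(X_\varepsilon(\cdot))\}$, hence of $\{u(X_\varepsilon(\cdot))\}$ for every $u$ in the core $S(\mathbb R^d)$. Together with the compact containment condition — which follows either from the same uniform estimates applied to a suitable sequence of cut-off functions, or from the tightness of the one-dimensional marginals of $X_\varepsilon$, already known to converge to Gaussian laws — the relative-compactness criterion of \cite[Ch.~3]{EK} gives relative compactness of $\{X_\varepsilon\}$ in $D_{\mathbb R^d}[0,\infty)$. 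Combining this with the convergence of finite-dimensional distributions completes the proof.

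The only genuine difficulty is the uniform control of the compensators just described; everything else is bookkeeping, and the corrector construction of Section~\ref{s_corr} is exactly what makes that control possible. An equivalent route, packaging the same idea differently, is to quote directly the Feller-process version of the Trotter--Kato theorem, \cite[Ch.~4]{EK}: its hypotheses are precisely the Feller property of the $X_\varepsilon$ and $X_0$, the semigroup convergence of Proposition~\ref{C_3}, and the (trivial) convergence of the initial laws, and its proof carries out the tightness step with the resolvent functions $R^\varepsilon_\lambda g=(\lambda-L^\varepsilon)^{-1}g$ in place of $v^\varepsilon$; these satisfy $\|R^\varepsilon_\lambda g\|_{C_0}\le\lambda^{-1}\|g\|_{C_0}$ and $\|L^\varepsilon R^\varepsilon_\lambda g\|_{C_0}=\|\lambda R^\varepsilon_\lambda g-g\|_{C_0}\le2\|g\|_{C_0}$ uniformly in $\varepsilon$.
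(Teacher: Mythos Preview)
Your proposal is correct, and the short ``equivalent route'' you sketch in the last paragraph is precisely what the paper does: it records the Feller property and c\`adl\`ag paths via Lemma~\ref{C_1}, notes that $X_0$ has continuous trajectories, and then invokes directly Theorem~2.5 in Chapter~4 of \cite{EK}, using Proposition~\ref{C_3} for the semigroup (hence finite-dimensional) convergence. The paper spends no effort on tightness, because that theorem packages it.

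Your main line of argument --- building martingales from the corrected functions $v^\varepsilon$, bounding the compensators uniformly, and applying Aldous's criterion together with compact containment --- is a legitimate, more hands-on alternative. It essentially reproduces the internal mechanism of the Ethier--Kurtz theorem (which, as you note, uses the resolvent functions $R^\varepsilon_\lambda g$ in the same role). What your approach buys is transparency: one sees explicitly where the uniform-in-$\varepsilon$ control comes from and that the correctors of Section~\ref{s_corr} are already sufficient for path-space tightness, not merely for resolvent convergence. What the paper's approach buys is brevity: once Proposition~\ref{C_3} is in hand, the weak convergence is a one-line citation, and there is no need to revisit the correctors or verify compact containment separately. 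Both are sound; the paper simply chooses the black box.
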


\begin{proof}
The fact that $X_\eps$ has a modification in $D_{\mathbb R^d}[0, \infty)$ has been justified in Lemma \ref{C_1}. The limit process $X_0$ is a diffusion process that has continuous trajectories.
The convergence in distributions $X_\varepsilon \ \Rightarrow \ X_0$ in the paths space $D_{\mathbb R^d}[0, \infty)$ follows from Proposition \ref{C_3} that gives the convergence of finite-dimensional distributions,
and Theorem 2.5 (Chapter 4) from \cite{EK}.


\end{proof}

\end{document}